\documentclass[11pt,oneside]{amsart}
%constanze
\setlength{\textwidth}{15cm} \setlength{\evensidemargin}{0cm}

\setlength{\oddsidemargin}{0.5cm}

\setlength{\textheight}{23cm}

\setlength{\voffset}{-1cm}

\usepackage{amsmath}

\usepackage{amsfonts}

\usepackage{amsthm}

\usepackage{amssymb}

\usepackage[all]{xy}
%\usepackage{float}
%\restylefloat{table}
\setlength{\parindent}{0pt}

\pagestyle{plain} \SelectTips{cm}{}
%mine
\usepackage{etex}
\usepackage{verbatim,latexsym,exscale,amssymb,amsmath,amsthm,amsfonts,mathrsfs}
\usepackage[all]{xy}
\usepackage[usenames, dvipsnames]{color}
\usepackage{tikz-cd}
\usepackage{relsize}
\usepackage{needspace} 
\usepackage{tikz}
\usepackage{amsmath}
\usepackage{url}
\usepackage{algorithm}
\DeclareMathOperator*{\hocolim}{hocolim}
\DeclareMathOperator*{\holim}{holim}

\newcommand{\rNum}[1]{\lowercase\expandafter{\romannumeral #1\relax}}
\newdir{ >}{{}*!/-5pt/@{>}}
\everymath{\displaystyle}
%suspicious
\usepackage{fontenc}
\usepackage{graphicx}
\usepackage{bm}
\usepackage{listings}
\usepackage{float}
\usepackage{hyperref}
%\definecolor{darkblue}{rgb}{0.03,0.02,0.7}
%\definecolor{darkgreen}{rgb}{0,0.48,0.65}
%\hypersetup{colorlinks, citecolor=darkgreen, filecolor=black, linkcolor=darkblue, urlcolor=blue } 
%\hypersetup{colorlinks, citecolor=black, filecolor=black, linkcolor=black, urlcolor=black } 

\usepackage{colortbl}
\usepackage{subcaption}
\usepackage{caption}
\usepackage{array}
\usepackage{blkarray}
\usepackage{indentfirst}
\usepackage{inputenc}
\usepackage{booktabs}
\usepackage{dsfont}
\usepackage{algpseudocode,algorithm,algorithmicx}

%\onehalfspacing
%\doublespacing

\definecolor{Gray}{gray}{0.95}
\graphicspath{{Plots/}}

%arrows
\makeatletter
\newbox\xrat@below
\newbox\xrat@above
\newcommand{\xrightarrowtail}[2][]{%
  \setbox\xrat@below=\hbox{\ensuremath{\scriptstyle #1}}%
  \setbox\xrat@above=\hbox{\ensuremath{\scriptstyle #2}}%
  \pgfmathsetlengthmacro{\xrat@len}{max(\wd\xrat@below,\wd\xrat@above)+.6em}%
  \mathrel{\tikz [>->,baseline=-.75ex]
                 \draw (0,0) -- node[below=-2pt] {\box\xrat@below}
                                node[above=-2pt] {\box\xrat@above}
                       (\xrat@len,0) ;}}
\makeatother

\makeatletter
\newcommand{\xdbheadrightarrow}[2][]{%
  \ext@arrow 0099\xdbheadfill@{#1}{#2}}%
\newcommand{\xdbheadfill@}{%
  \arrowfill@\relbar\relbar{\mathrel{\vphantom{\rightarrow}\smash{\twoheadrightarrow}}}}
\makeatother

\setlength{\parindent}{0pt}

\pagestyle{plain} \SelectTips{cm}{}

\title{Rigidity of the $K(1)$-local stable homotopy category}

\author{Jocelyne Ishak}

\email{ji75@kent.ac.uk}

\address{J. Ishak \\ University of Kent \\ School of Mathematics, Statistics and Actruarial Science\\ Sibson building \\ Canterbury, CT2 7FS, UK}

\subjclass[2010]{}

%theorems

\theoremstyle{theorem}

\newtheorem{thm}{Theorem}[section]
\newtheorem{lemma}[thm]{Lemma}
\newtheorem{thm-defi}[thm]{Theorem-Definition}
\newtheorem{prop}[thm]{Proposition}
\newtheorem{cor}[thm]{Corollary}
\newtheorem*{thm*}{Theorem}
\newtheorem*{rigidity thm*}{Rigidity Theorem}
\newtheorem*{main thm*}{$K(1)$-local Rigidity Theorem}
\newtheorem*{lemma*}{Lemma}
\newtheorem*{cor*}{Corollary}
\newtheorem{exam}[thm]{Example}
\theoremstyle{definition}

\newtheorem{defi}[thm]{Definition}
\newtheorem*{conv*}{Conventions}

\newtheorem*{notation}{Notation}

\newtheorem{rem}[thm]{Remark}
\newtheorem*{remarks*}{Remarks}

\theoremstyle{remark}

%\date{\today}

\addtolength{\parskip}{2mm}

\begin{document}

\begin{abstract}

We investigate a new case of rigidity in stable homotopy theory which is the rigidity of the $K(1)$-local stable homotopy category $\mathrm{Ho}(L_{K(1)}\mathrm{Sp})$ at $p=2$. In other words, we show that recovering higher homotopy information by just looking at the triangulated structure of $\mathrm{Ho}(L_{K(1)}\mathrm{Sp})$ is possible, which is a property that only few interesting stable model categories are known to possess.

\end{abstract}

\maketitle
\section*{Introduction}
Model categories were introduced to create a better tool in order to describe homotopy. This enabled us to transform algebraic topology from the study of topological spaces into a wider setting useful in many areas of mathematics. In brief, a model structure on a category $\mathcal{C}$ is a choice of three distinguished classes of morphisms: weak equivalences ($\xrightarrow{\thicksim}$), fibrations ($\twoheadrightarrow$), and cofibrations ($\rightarrowtail$) satisfying certain axioms.

 We can pass to the homotopy category Ho($\mathcal{C}$) associated to a model category $\mathcal{C}$ by inverting the weak equivalences, i.e. by making them into isomorphisms. While the axioms allow us to define the homotopy relations between classes of morphisms in $\mathcal{C}$, the classes of fibrations and cofibrations provide us with a solution to the set-theoretic issues arising in general localisations of categories. Even though it is sometimes sufficient to work in the homotopy category, looking at the homotopy level alone does not provide us with enough higher order structure information. For example, homotopy (co)limits are not usually a homotopy invariant, and in order to define them we need the tools provided by the model category. This is where the question of rigidity may be asked: if we just had the structure of the homotopy category, how much of the underlying model structure can we recover?

This question of rigidity has been investigated during the last decade, and some examples have been studied, but there are still a lot of open questions regarding this fascinating subject. Starting with the stable homotopy category Ho($\mathrm{Sp}$), that is the homotopy category of spectra, Schwede \cite{schwede2007stable} showed that if Ho(Sp) is equivalent as a triangulated category to the homotopy category of a stable model category $\mathcal{C}$, then the model category of spectra is Quillen equivalent to $\mathcal{C}$. In other words, Ho($\mathrm{Sp}$) is \emph{rigid}.

%\begin{rigidity thm*}\cite{schwede2007stable}
%Let $\mathcal{C}$ be a stable model category, and 
%\[
%\begin{array}{ccccc}
%\Phi & : & \mathrm{Ho}(\mathrm{Sp}) & \to & \mathrm{Ho}(\mathcal{C}) \\
%\end{array}
%\]
%an equivalence of triangulated categories. Then the underlying model categories $\mathrm{Sp}$ and $\mathcal{C}$ are Quillen equivalent.
%\end{rigidity thm*}

Now, the next question could be if there is a similar result for Bousfield localisations of the stable homotopy category with respect to certain homology theories. If we look at the part of the stable homotopy category that is readable by a given homology theory, will that structure give us a rigid example? Particularly interesting  localisations are the ones with respect to  Morava $K$-theories $K(n)$  with coefficient ring 
\begin{displaymath}
K(n)_{\ast}\cong \mathbb{F}_p[v_n,v_n^{-1}], \ |v_n|=2p^n-2,
\end{displaymath} 
as well as with respect to Johnson-Wilson theories $E(n)$, with
\begin{displaymath}
E(n)_{\ast}\cong \mathbb{Z}_{(p)}[v_1,v_2,...,v_n,v_n^{-1}], \ |v_i|=2p^i-2.
\end{displaymath}
Both theories at $n=1$ are related to complex $K$-theory in different ways. More precisely, by the Adams splitting, the spectrum $E(1)$ is a summand of complex $K$-theory localised at $p$  \begin{displaymath}
K_{(p)}\cong \bigvee_{i=0}^{p-2}\Sigma^{2i}E(1),
\end{displaymath} 
while $K(1)$ is a summand of mod-$p$ complex $K$-theory \cite[Proposition 1.5.2]{ravenel_orange}. In our case of interest in this article, $p=2$, we have that mod-2 $K$-theory coincides with $K(1)$ since there is only one such summand.

Starting with the Johnson-Wilson theories $E(n)$, for a fixed prime $p$, the localisation of spectra with respect to it is denoted $L_n(\mathrm{Sp})$ (the prime $p$ is omitted from the notation). If we look at the case where $n=1$, and $p=2$, then it has been shown in \cite{roitzheim2007rigidity} that $\mathrm{Ho}(L_1\mathrm{Sp})$ is rigid. However, if we consider the case where $n=1$ and $p\geq 5$, the situation is different since in \cite{franke1996uniqueness}, Franke constructed an exotic algebraic model for the $E(1)$-local stable homotopy category $\mathrm{Ho}(L_1\mathrm{Sp})$ at $p\geq 5$, i.e a model category that realises the same homotopy category but is not Quillen equivalent to $L_1\mathrm{Sp}$. For $p=3$, there is an equivalence, but the question whether it is triangulated remains open, more details about this is discussed in \cite{patchkoria2017exotic}. 

 Now, if we look at $L_n(\mathrm{Sp})$ for other values of $n$ and $p$, little is known about it. For $2p-2>n^2+n$, it has been shown in \cite{franke1996uniqueness} that a potential exotic model exists for $E(n)$-local spectra, although what is known so far is that we have a triangulated equivalence only for $n=1$ and $p \geq 5$. However, for $2p-2\leq n^2+n$, it is still an open question whether we will have rigidity or an exotic model, except the case $n=1$ and $p=2$ which has been shown to be rigid by Roitzheim in \cite{roitzheim2007rigidity}. In particular, for $n=2$ and $p=2$ or $p=3$, the question whether we have rigidity or an exotic model remains unanswered. For further examples of exotics models see \cite{schlichting2002note, dugger2009curious, patchkoria2017derived}, and for other cases of rigidity see \cite{schwede2_local, barnes2014rational, patchkoria2016rigidity, patchkoria2017rigidity}.

Another interesting localisation of spectra that we wish to know more about is the localisation with respect to Morava $K$-theory $K(n)$. In that case, nothing is known about the rigidity  $\mathrm{Ho}(L_{K(n)}\mathrm{Sp})$ or whether we have exotic models. For a fixed prime $p$, $K(n)$-local spectra can be viewed as the difference between $L_n(\mathrm{Sp})$ and $L_{n-1}(\mathrm{Sp})$. More precisely, we have 
\begin{displaymath}
L_n=L_{K(0)\vee K(1)\vee ... \vee K(n)},
\end{displaymath} therefore 
\begin{displaymath}
L_1(\mathrm{Sp})=L_{K(0)\vee K(1)}(\mathrm{Sp}).
\end{displaymath} Since \begin{displaymath}
L_{K(0)}=L_0=L_{H \mathbb{Q}}
\end{displaymath} is rationalisation, the question is whether anything can be said about the rigidity of the $K(1)$-local spectra for $p=2$.

In this article, we investigate one of the open questions mentioned above, which is the rigidity of the $K(1)$-local stable homotopy category $\mathrm{Ho}(L_{K(1)}\mathrm{Sp})$ at $p=2$. While the case of the $E(1)$-local stable homotopy category is related to the $K(1)$-local case, there are a lot of technical differences to keep in mind while studying the $K(1)$-local case. Firstly, unlike the $K(1)$-localisation, the $E(1)$-localisation is smashing, which result in having different compact generators: the local sphere $L_1\mathbb{S}^0$ is a compact generator for the $E(1)$-local case, while a compact generator of $\mathrm{Ho}(L_{K(1)}\mathrm{Sp})$ is given by the  $K(1)$-local mod-2 Moore spectrum $L_{K(1)}M$. Adding to that, 
while $K(1)$-locality implies $E(1)$-locality, the converse is not true i.e. $E(1)$-locality does not imply $K(1)$-locality. Therefore a key theorem used in the proof of the rigidity of $\mathrm{Ho}(L_{E(1)}\mathrm{Sp})$, which is the ``$v_1$-periodicity theorem'' cannot be used in the $K(1)$-local case.
 Our main  result is thus
 \newpage
\begin{main thm*}
Let $\mathcal{C}$ be a stable model category, $p=2$, and let $\Phi$ be an equivalence of triangulated categories 
\[
\begin{array}{ccccc}
\Phi & : & \mathrm{Ho}(L_{K(1)}\mathrm{Sp}) & \to & \mathrm{Ho}(\mathcal{C}). \\
\end{array}
\]
 Then the underlying model categories $L_{K(1)}\mathrm{Sp}$ and $\mathcal{C}$ are Quillen equivalent.
\end{main thm*}
This paper is organised as follows. In the first two sections we recall some definitions surrounding stable model categories and Bousfield localisation. We then start setting up the necessary ingredients to construct the desired Quillen equivalence. The starting point is finding a new characterisation related to $v_1$-self maps to detect $K(1)$-locality. In literature, this is stated for $K$-local spectra, but we can modify it to show that under certain assumptions, it actually proves that a spectrum is $K(1)$-local, which is a stronger statement. After that, we construct a Quillen functor
\begin{displaymath}
L_{K(1)}\mathrm{Sp}\rightarrow \mathcal{C}
\end{displaymath}
by proving that the Quillen functor \begin{displaymath}
\mathrm{Sp}\rightarrow \mathcal{C},
\end{displaymath} 
constructed by the Universal Property of Spectra \cite[5.1]{schwede2002uniqueness}, can be extended to $L_{K(1)}\mathrm{Sp}$ since the right adjoint sends fibrant objects to $K(1)$-local objects. Lastly, we prove that the constructed Quillen functor is a Quillen equivalence by reducing the argument to endomorphisms of the compact generator of $\mathrm{Ho}(L_{K(1)}\mathrm{Sp})$.
%\newpage
\section*{Acknowledgements}
I would like to thank my PhD supervisor Constanze Roitzheim for her professional guidance and valuable support. I am also grateful to John Greenlees and Andy Baker for many helpful discussions. Further thanks go to the referee and Tobias Barthel for valuable comments and suggestions on an earlier version.
\section{Stable model categories}
 We assume that the reader is familiar with basic notions regarding model categories, for example consult \cite{hovey2007model} and \cite{dwyer1995homotopy}.

\begin{defi}
Let $\mathcal{C}$ be a pointed model category and $X \in \mathcal{C}$. First construct $X^{\mathrm{c}}$, a cofibrant replacement of $X$.
One can define a \emph{suspension} of $X$ denoted $\Sigma X$ as the pushout of the diagram 
\begin{displaymath}
\ast \xleftarrow{\ \ \ \  } X^{\mathrm{c}} \amalg X^{\mathrm{c}} \xrightarrowtail[]{\ \ \  }  X^{\mathrm{c}} \wedge I,
\end{displaymath}
where $X^{\mathrm{c}} \wedge I$ is a very good cylinder object of $X^{\mathrm{c}}$.\\
Dually, one can define a \emph{loop object} $\Omega X$ of $X$ by the pullback of the diagram 
\begin{displaymath}
{(X^{\mathrm{f}})}^{I} \xdbheadrightarrow[]{ \ \ \ } {(X^{\mathrm{f}})} \times {(X^{\mathrm{f}})} \longleftarrow \ast ,
\end{displaymath}
where ${(X^{\mathrm{f}})}^{I}$ is a very good path object for a fibrant replacement $X^{\mathrm{f}}$ of $X$.
\end{defi}

%\begin{defi}
%Let $\mathcal{C}$ be a pointed model category and $X \in Ob(\mathcal{C})$. We choose a factorisation $X \rightarrowtail C \xrightarrow {\thicksim} \ast $ of the unique morphism from $X$ into the terminal object. The \emph{suspension} $\Sigma X$ of $X$ is defined as the pushout of the diagram $$ \ast \leftarrow X \rightarrowtail C$$
 %Dually, choosing a factorisation $\ast \xrightarrow {\thicksim} A \twoheadrightarrow X$, the \emph{loop} $\Omega X$ of $X$ is the pullback of the diagram $$ \ast \rightarrow X \twoheadleftarrow A.$$
 %\end{defi}
These constructions are not functorial nor adjoint on $\mathcal{C}$, but they become functorial and adjoint in the homotopy category, and they form an adjunction \begin{displaymath}
 \Sigma :\mathrm{Ho}(\mathcal{C}) \rightleftarrows \mathrm{Ho}(\mathcal{C}):\Omega.
 \end{displaymath}
\begin{notation}
Throughout this paper, we use the following convention: for an adjoint functor pair $F:\mathcal{C} \rightleftarrows \mathcal{D}:G$, the top arrow denotes the left adjoint and the bottom arrow the right adjoint.
\end{notation}

 \begin{defi}
 A pointed model category $\mathcal{C}$ is called \emph{stable} if $\Sigma$ and $\Omega$ are inverse equivalences of homotopy categories.
 \end{defi}
Stable model categories are interesting to study since they carry more structure in their homotopy categories. More precisely, the homotopy category $\mathrm{Ho(\mathcal{C})}$ of a stable model category $\mathcal{C}$ is a triangulated category, where the exact triangles are given by the fiber and cofiber sequences, since in this case they coincide up to sign \cite[7.1.6]{hovey2007model}. Furthermore, Quillen functors between stable model categories induce exact functors on the respective homotopy categories, i.e. functors that respect the triangulated structure. In particular, since the category of spectra $\mathrm{Sp}$ with the  Bousfield-Friedlander model structure \cite{bousfield1978homotopy} is stable, its homotopy category $\mathrm{Ho(Sp)}$ is a triangulated category.
\begin{notation}
We denote the morphisms in a triangulated category $\mathcal{T}$ by $[A,B]^{\mathcal{T}}$. This is a group since triangulated categories are in particular additive. By $[A,B]^{\mathcal{T}}_n$ we mean $[\Sigma ^nA,B]^{\mathcal{T}}$. If $\mathcal{T}=\mathrm{Ho}(\mathcal{C})$ for some stable model category, we write $[A,B]^{\mathcal{C}}$ instead of $[A,B]^{\mathrm{Ho}(\mathcal{C})}$.
\end{notation}

Let $\mathcal{C}$ be a fixed stable model category (for example $\mathcal{C}=\mathrm{Sp}$), and $\mathcal{D}$ any stable model category. Assuming that there is an equivalence of triangulated categories on their homotopy level \begin{displaymath}
\Phi : \mathrm{Ho}(\mathcal{C}) \xrightarrow{\sim} \mathrm{Ho}(\mathcal{D}),
\end{displaymath}
are $\mathcal{C}$ and $\mathcal{D}$ Quillen equivalent?\begin{enumerate}
\item[$\bullet$] If the answer is affirmative, then we say that $\mathrm{Ho}(\mathcal{C})$ is \emph{rigid}. For example, for $\mathcal{C}=\mathrm{Sp}$, Schwede showed that $\mathrm{Ho(Sp)}$ is rigid \cite{schwede2007stable}.
\item[$\bullet$]If the answer is negative, then we have a counterexample where rigidity is not verified, we say that $\mathcal{D}$ is an \emph{exotic model} for  $\mathcal{C}$. 
\end{enumerate}

\begin{defi}
Let $\mathcal{T}$ be a triangulated category with infinite coproducts, and $\mathcal{T'}$ a full triangulated subcategory of $\mathcal{T}$ with shift and triangles induced from $\mathcal{T}$. The subcategory $\mathcal{T'}$ is called \emph{localising} if it is closed under coproducts in $\mathcal{T}$.
\end{defi}
\begin{defi}\label{generator}

 A set $\mathcal{G}$ of objects of a triangulated category  $\mathcal{T}$  is called a set of \emph{generators} if the only localising subcategory containing the objects of $\mathcal{G}$ is $\mathcal{T}$ itself. 
\end{defi} 
 \begin{defi}
  We say that an object $A$ of a triangulated category  $\mathcal{T}$ is \emph{compact} (also called small or finite) if the functor $[A,-]^{\mathcal{T}}$ from $\mathcal{T}$ to groups commutes with arbitrary coproducts, i.e. for any family of objects $\{A_i\}_{i\in \mathcal I}$ whose coproduct exists, the canonical map \[ \textstyle\bigoplus\limits_{i\in I}\left[A,A_i\right]^{\mathcal{T}} \rightarrow \left[A,\coprod A_i\right]^{\mathcal{T}} \] is an isomorphism.  %An object $G$ in a triangulated category $\mathcal{T}$ is a \emph{generator} if it detects isomorphims, meaning that a morphism $X\rightarrow Y$ is an isomorphism if and only if $[G,X]^{\mathcal{T}}\rightarrow [G,Y]^{\mathcal{T}}$ is an isomorphism.
\end{defi}
Note that objects of a stable model category are called ``generators'' or ``compact'' if they are so when considered as objects of the triangulated homotopy category. For a list of interesting examples of compact generators see \cite[Examples 2.3]{schwede2003stable}.\\
The next theorem tells us what criterion should be satisfied by a set of compact objects, in order to become generators of a triangulated category. 

\begin{thm}\label{Keller}\cite[Lemma 2.2.1]{schwede2003stable}
Let $\mathcal{T}$ be a triangulated category with infinite coproducts, and $\mathcal{G}$ a set of compact objects. Then the following are equivalent:
%\begin{samepage}
\begin{itemize}
\item[(i)] The set $\mathcal{G}$ generates $\mathcal{T}$ in the sense of Definition \ref{generator}.
\item[(ii)]The objects of $\mathcal{G}$ detect isomorphisms, meaning that a morphism \mbox{$X\rightarrow Y$} in $\mathcal{T}$ is an isomorphism if and only if $[G,X]^{\mathcal{T}}\rightarrow [G,Y]^{\mathcal{T}}$ is an isomorphism for all $G \in \mathcal{G}$. 
\end{itemize}
%\end{samepage}
\end{thm}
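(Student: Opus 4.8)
The plan is to prove the equivalence $(i)\Leftrightarrow(ii)$ by the standard localising-subcategory argument, using compactness of the objects in $\mathcal{G}$ crucially for the direction $(ii)\Rightarrow(i)$.

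For $(i)\Rightarrow(ii)$, first I would note the easy direction: if $f\colon X\to Y$ is an isomorphism in $\mathcal{T}$, then $[G,X]^{\mathcal{T}}\to[G,Y]^{\mathcal{T}}$ is automatically an isomorphism for every object $G$, so in particular for $G\in\mathcal{G}$. For the converse within this direction, suppose $[G,f]^{\mathcal{T}}$ is an isomorphism for all $G\in\mathcal{G}$. Complete $f$ to an exact triangle $X\xrightarrow{f}Y\to C\to\Sigma X$. Applying the cohomological functor $[G,-]^{\mathcal{T}}$ and using that $[G,f]^{\mathcal{T}}$ and $[G,\Sigma f]^{\mathcal{T}}=[\Sigma^{-1}G,f]^{\mathcal{T}}$ (the shift is an equivalence, so shifts of objects of $\mathcal{G}$ also have this property, or one argues degreewise with $[G,-]^{\mathcal{T}}_n$) are isomorphisms, the long exact sequence forces $[G,C]^{\mathcal{T}}_n=0$ for all $G\in\mathcal{G}$ and all $n$. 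Now I would consider the full subcategory $\mathcal{S}$ of objects $Z$ with $[G,Z]^{\mathcal{T}}_n=0$ for all $G\in\mathcal{G}$, $n\in\mathbb{Z}$; this is a localising subcategory (closed under shifts, triangles by the long exact sequence, and coproducts precisely because each $G$ is compact, so $[G,-]^{\mathcal{T}}$ commutes with coproducts). Since $\mathcal{S}$ contains no nonzero object of $\mathcal{G}$... wait — more directly: the subcategory of $Z$ with $[G,Z]=0$ for all $G$ need not contain $\mathcal{G}$, so that is not the right object here. Instead, for this direction I would use $(i)$ differently: consider $\mathcal{S}^{\perp}$-type reasoning is circular; the clean route is to observe that $C$ lies in the localising subcategory generated by $\mathcal{G}$ (which is all of $\mathcal{T}$ by $(i)$) and simultaneously satisfies $[G,C]^{\mathcal{T}}_\ast=0$, and a standard argument shows the full subcategory of objects $W$ with $[W,C]^{\mathcal{T}}_\ast=0$ is localising and contains $\mathcal{G}$, hence is all of $\mathcal{T}$, so $[C,C]^{\mathcal{T}}=0$ and $C=0$; then $f$ is an isomorphism.

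For $(ii)\Rightarrow(i)$, let $\mathcal{L}$ be the smallest localising subcategory of $\mathcal{T}$ containing $\mathcal{G}$. I want to show $\mathcal{L}=\mathcal{T}$. Given any $X\in\mathcal{T}$, the idea is to build a map $P\to X$ with $P\in\mathcal{L}$ that induces an isomorphism on $[G,-]^{\mathcal{T}}_\ast$ for all $G\in\mathcal{G}$; then by $(ii)$ this map is an isomorphism, so $X\cong P\in\mathcal{L}$. To construct $P$, I would run the usual Adams-type cellular tower / small object argument: set $P_0=\bigoplus$ of shifted copies of objects of $\mathcal{G}$ indexed by all elements of all $[G,X]^{\mathcal{T}}_n$, with the evident evaluation map $P_0\to X$; inductively attach cells to kill the kernel and cokernel of $[G,P_k]^{\mathcal{T}}_\ast\to[G,X]^{\mathcal{T}}_\ast$, forming $P_{k+1}$ from $P_k$ by a triangle whose third term is again a coproduct of shifts of elements of $\mathcal{G}$; then take $P=\hocolim_k P_k$ (a homotopy colimit, realised as the cofibre of $\mathrm{id}-\mathrm{shift}$ on $\bigoplus P_k$, hence still in $\mathcal{L}$). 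Compactness of each $G$ guarantees $[G,P]^{\mathcal{T}}_\ast=\operatorname{colim}_k[G,P_k]^{\mathcal{T}}_\ast\cong[G,X]^{\mathcal{T}}_\ast$. Invoking $(ii)$ then finishes it.

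The main obstacle, and the step deserving the most care, is the homotopy colimit construction in $(ii)\Rightarrow(i)$: verifying that the telescoping triangle $\bigoplus_k P_k\xrightarrow{1-\mathrm{shift}}\bigoplus_k P_k\to P$ really does compute $\hocolim P_k$ in the triangulated sense, that $P$ genuinely lies in $\mathcal{L}$ (this uses that $\mathcal{L}$ is closed under coproducts and triangles), and — the linchpin — that compactness of the generators lets one commute $[G,-]^{\mathcal{T}}$ past this homotopy colimit to get $\operatorname{colim}_k[G,P_k]^{\mathcal{T}}$. One must also check the bookkeeping that the inductive cell-attachments actually make the comparison map an isomorphism in the colimit (surjectivity is arranged at stage $0$ onwards, injectivity is cleared in the limit). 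Everything else is a routine application of the long exact sequence of a triangle and the fact that the shift is an equivalence. I expect to cite this as a known result (it is Lemma 2.2.1 of \cite{schwede2003stable}) and only sketch the tower argument.
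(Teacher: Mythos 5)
The paper does not prove this theorem; it is cited verbatim from \cite[Lemma 2.2.1]{schwede2003stable}, so there is no in-paper proof to compare against. Your sketch is a correct rendering of the standard argument found in that reference (and in, e.g., Hovey--Palmieri--Strickland): for $(ii)\Rightarrow(i)$, the Bousfield-type cellular tower $P_0\to P_1\to\cdots$ built from coproducts of shifted generators, with $P=\hocolim_k P_k$ realised as the cofibre of $1-\mathrm{shift}$ on $\bigoplus_k P_k$, and compactness used to commute $[G,-]^{\mathcal{T}}_\ast$ past both the coproducts and the telescope; for $(i)\Rightarrow(ii)$, the corrected argument via the localising subcategory $\{W : [W,C]^{\mathcal{T}}_\ast=0\}$ is the right one, and your observation that this direction needs no compactness is precisely the content of Remark~\ref{rem_generator}. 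Two small points worth tightening if you were to write this out in full: in the tower you say ``kill the kernel and cokernel,'' but since $P_0$ already surjects onto $[G,X]^{\mathcal{T}}_\ast$ for every $G\in\mathcal{G}$ and surjectivity persists up the tower, only kernel elements need to be attached away (you acknowledge this at the end, but the earlier phrasing is at odds with it); and when checking $[G,P]^{\mathcal{T}}_\ast\cong\operatorname{colim}_k[G,P_k]^{\mathcal{T}}_\ast$, you should record that $1-\mathrm{shift}$ acts injectively on $\bigoplus_k[G,P_k]^{\mathcal{T}}_\ast$, so the long exact sequence of the telescope triangle actually splits into the short exact sequences giving the sequential colimit.
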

The previous theorem will consist an important step in the proof of the main result in this article. Briefly speaking, if we want to prove that a criterion is true for all the objects in a certain triangulated category, then it is often sufficient to prove it true for a compact generator.
\begin{rem}\label{rem_generator}
Note that in Theorem \ref{Keller}, the point (i) implies (ii) even without the hypotheses of compactness. In other words, the objects of a set of generators detect isomorphisms.
\end{rem}
\section{Chromatic homotopy theory} 
\begin{defi}
 A \emph{spectrum} $X$ is a sequence of pointed simplicial sets ${\displaystyle(X_0,X_1,\cdots)}$ together with structure maps \begin{displaymath}
 \sigma_n^X: \Sigma X_n \rightarrow X_{n+1}, \  \text{or equivalently}
 \end{displaymath}\begin{displaymath}
 \bar{\sigma}^X_n: X_n \rightarrow \Omega X_{n+1}.
 \end{displaymath} A morphism $f:X \rightarrow Y$ of spectra is a collection of morphisms of pointed sets \begin{displaymath}
 f_n:X_n\rightarrow Y_n
\end{displaymath} that commute with the structure maps, that is, \begin{displaymath}
f_{n+1}\circ \sigma_n^X=\sigma_n^Y\circ \Sigma f_n, \  \text{ for all } n\geq 0.
\end{displaymath}A spectrum $X$ is called a \emph{suspension spectrum} (respectively an \emph{$\Omega$-spectrum}) if $\sigma_n^X$ (respectively $\bar{\sigma}^X_n$) is a weak homotopy equivalence for all $n$.
 
 \end{defi}
 \begin{notation}
Throughout this paper, $\mathrm{Sp}$ denotes the category of spectra with the stable Bousfield-Friedlander model structure \cite{bousfield1978homotopy}. The mod-$p$ Moore spectrum is denoted $M(\mathbb{Z}/p)$, and is the cone of multiplication by $p$ on the sphere spectrum, i.e. it is part of a distinguished triangle in $\mathrm{Ho(Sp)}$
\begin{displaymath}
 \mathbb{S}^0\xrightarrow{.p} \mathbb{S}^0\xrightarrow{\mathrm{incl}} M(\mathbb{Z}/p)\xrightarrow{\mathrm{pinch}} \Sigma\mathbb{S}^0.
\end{displaymath}
Here, incl is the inclusion of the bottom cell, and pinch is the map that ``pinches'' off the bottom cell so that only the top cell is left.
 \end{notation}
 
 Bousfield localisation restricts attention to the part of the stable homotopy theory visible to a given homology theory $E_{\ast}$, which makes this tool very useful in studying the stable homotopy category. The main references for such constructions are \cite{bousf} and \cite{ravenel1984localization}. This construction becomes particularly interesting when looking at some very special homology theories that give information about the structure of the $p$-local stable homotopy catgeory for some prime $p$. In our case, we are interested in localisation with respect to Morava $K$-theory $K(1)$ at $p=2$, with the following model structure.

Let $\mathrm{Sp}$ be the category of spectra with the Bousfield-Friedlander model structure. Then there is a model category $L_{K(1)} \mathrm{Sp}$ with the same objects as $\mathrm{Sp}$ and with the following model structure.
\begin{enumerate}
\item[$\bullet$]The weak equivalences are the $K(1)_{\ast}$-equivalences.
\item[$\bullet$]The cofibrations are the cofibrations of $\mathrm{Sp}$.
\item[$\bullet$]The fibrations are the maps with the right lifting property with respect to cofibrations that are $K(1)_{\ast}$-equivalences.

\end{enumerate}

\begin{rem}
The model categories $\mathrm{Sp}$ and $L_{K(1)}\mathrm{Sp}$ have the same cofibrant objects, but the fibrant objects in $L_{K(1)}\mathrm{Sp}$ are the one which are fibrant in $\mathrm{Sp}$ and $K(1)_{\ast}$-local. The set of homotopy classes of maps in $\mathrm{Ho}(L_{K(1)}\mathrm{Sp})$ is denoted \begin{displaymath}
[X,Y]^{L_{K(1)}\mathrm{Sp}}=[L_{K(1)}X,L_{K(1)}Y]^{\mathrm{Sp}} \cong [X,L_{K(1)}Y]^{\mathrm{Sp}}.
\end{displaymath} 
\end{rem} 
\begin{defi}
Localisation at $E$ is said to be \emph{smashing} if for every spectrum $X$, the map \begin{displaymath}
\mathrm{Id} \wedge \eta_{\mathbb{S}^0}:X \rightarrow X\wedge L_E\mathbb{S}^0
\end{displaymath} 
is an $E$-localisation.
\end{defi}
A nice feature of the functors $L_n$ is that they are \emph{smashing} unlike the functors $L_{K(n)}$.
\begin{thm}[Smash product theorem] \cite[Theorem 7.5.6]{ravenel_orange}
For any spectrum $X$, \begin{displaymath}
L_nX \simeq X \wedge L_n\mathbb{S}^0.
\end{displaymath}

\end{thm}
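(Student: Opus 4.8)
The plan is to recognise the statement as the assertion that $L_n$ is \emph{smashing}, and to prove it by induction on $n$ along the lines of Hopkins and Ravenel. First I would reformulate the claim. For any spectrum $X$ the unit map $X = X\wedge\mathbb{S}^0 \to X\wedge L_n\mathbb{S}^0$ is an $E(n)_\ast$-equivalence, since smashing with an arbitrary spectrum preserves $E(n)_\ast$-equivalences (if $f$ is an $E(n)_\ast$-equivalence then so is $f\wedge\mathrm{id}_X$, because $E(n)\wedge f\wedge X$ is an equivalence). Hence $X$ and $X\wedge L_n\mathbb{S}^0$ have the same $E(n)$-localisation, and the natural comparison map factors as $X\wedge L_n\mathbb{S}^0 \to L_n(X\wedge L_n\mathbb{S}^0) = L_nX$. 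So the theorem is equivalent to the statement that $X\wedge L_n\mathbb{S}^0$ is $E(n)$-local for every spectrum $X$. The base case $n=0$ is $L_0 = L_{H\mathbb{Q}}$, where $X\wedge L_0\mathbb{S}^0 = X\wedge\mathbb{S}_\mathbb{Q} = X_\mathbb{Q}$ is manifestly rational.

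For the inductive step, assume $L_{n-1}$ is smashing, and let $M_n = \mathrm{fib}(L_n\to L_{n-1})$ be the $n$-th monochromatic functor. Since smashing with $X$ is exact, it carries the fibre sequence $M_n\mathbb{S}^0\to L_n\mathbb{S}^0\to L_{n-1}\mathbb{S}^0$ to a fibre sequence, and naturality gives a map of fibre sequences over $M_nX\to L_nX\to L_{n-1}X$ whose right-hand term is $X\wedge L_{n-1}\mathbb{S}^0\to L_{n-1}X$, an equivalence by the inductive hypothesis. Therefore $X\wedge L_n\mathbb{S}^0\to L_nX$ is an equivalence if and only if $X\wedge M_n\mathbb{S}^0\to M_nX$ is, so the whole theorem reduces to showing that each monochromatic functor $M_n$ is smashing ($M_0 = L_0$ being trivially so).

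The heart of the argument is thus a statement in the monochromatic, equivalently $K(n)$-local, world. Since $M_n = M_n L_{K(n)}$ --- a consequence of the chromatic fracture square --- the object $M_n\mathbb{S}^0$ is built from $K(n)$-local data, and I would access it through generalised type-$n$ Moore spectra: by the periodicity theorem of Hopkins and Smith, fix a cofinal inverse system $M_I = \mathbb{S}/(p^{i_0}, v_1^{i_1},\dots,v_{n-1}^{i_{n-1}})$ of finite type-$n$ complexes (the $v_j$-self maps supply the attaching data, cofinality comes from the thick subcategory theorem), so that $M_n\mathbb{S}^0$ is recovered, up to a shift, as the homotopy limit $\holim_I (M_n\mathbb{S}^0\wedge M_I)$ --- a completeness property of the monochromatic category. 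Because each $M_I$ is finite, $M_n$ being smashing would follow immediately if $X\wedge(-)$ commuted with this homotopy limit. That interchange is the crux and the main obstacle: smashing does not commute with infinite homotopy limits in general. The decisive input is a \emph{uniform} horizontal vanishing line --- the Adams-type spectral sequence converging to $\pi_\ast(M_nM_I)$ has $E_\infty^{s,\ast}=0$ for $s$ above a bound that does not depend on $I$, ultimately a manifestation of the finite (virtual) cohomological dimension of the Morava stabiliser group, and in the original argument of Hopkins and Ravenel a consequence of the nilpotence theorem applied to the $v_n$-localised $BP$-based Adams spectral sequence. Such a uniform bound renders the tower $\{M_n\mathbb{S}^0\wedge M_I\}_I$ pro-equivalent to a tower of uniformly bounded-above spectra, which is precisely the situation in which $X\wedge\holim_I(-)\simeq\holim_I(X\wedge -)$. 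Granting this vanishing line, $M_n$ is smashing and the induction closes; I expect essentially all of the genuine work, and all of the use of the deep theorems of chromatic homotopy theory, to be concentrated in establishing it.
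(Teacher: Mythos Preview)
The paper does not prove this theorem at all: it is stated with a citation to Ravenel's book \cite[Theorem 7.5.6]{ravenel_orange} and used as a black box (to conclude that $L_n$ preserves compactness and coproducts, Remark~\ref{smashing}). There is therefore no proof in the paper to compare your proposal against.

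That said, your sketch is a recognisable outline of the Hopkins--Ravenel strategy, and you correctly isolate the decisive input as a uniform horizontal vanishing line, ultimately a consequence of the nilpotence theorem. Two comments on the route you take. First, the reduction to ``$M_n$ is smashing'' is a detour relative to the argument in \cite{ravenel_orange}, which works directly with $L_n$: one shows that $L_n\mathbb{S}^0$ lies in the \emph{thick} subcategory of $E(n)$-local spectra generated by $L_nBP$, via a finite $L_nBP$-Adams tower whose finiteness is precisely the vanishing line; since $L_nBP\wedge X$ is manifestly $E(n)$-local for all $X$, the smashing property follows. Second, the identity you invoke, $M_n\mathbb{S}^0\simeq\holim_I(M_n\mathbb{S}^0\wedge M_I)$, is closer to a description of $L_{K(n)}\mathbb{S}^0$ than of $M_n\mathbb{S}^0$ (indeed $M_n\mathbb{S}^0\wedge M_I\simeq L_nM_I$ and $\holim_I L_nM_I\simeq L_{K(n)}\mathbb{S}^0$, which is not $M_n\mathbb{S}^0$ in general), so the holim/smash interchange step as written would need to be reworked. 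Neither point is a fatal gap in the overall plan --- the essential content, nilpotence $\Rightarrow$ vanishing line $\Rightarrow$ smashing, is correct --- but since the paper treats the result as an external input, no proof was expected here.
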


\begin{rem}\label{smashing}
The $E$-localisation functor $L_E$ is triangulated and preserves generators. However, it does not preserve compactness in general, because the functor $L_E$ does not commute with arbitrary coproducts. However, if the localisation is smashing then the functor $L_E$ commutes with arbitrary coproducts \cite[Proposition 1.27(d)]{ravenel1984localization}, and the image of a compact generator is again a compact generator. Therefore, the spectrum $L_E\mathbb{S}^0$ is a generator in $\mathrm{Ho}(L_E\mathrm{Sp})$, but it is compact for a smashing localisation like $L_{E(n)}$. 
\end{rem}

On the other hand, localisation with respect to the $n^{th}$-Morava $K$-theory is not smashing for $n>0$. Although the $K(n)$-local sphere is still a generator, it is not a compact one. However, the following result provides us with a compact generator for $\mathrm{Ho}(L_{K(1)}\mathrm{Sp})$.
\begin{lemma}\cite[Theorem 7.3]{hovey_strickland_morava} \emergencystretch 3em{For a fixed prime $p$, the spectrum $L_{K(1)}M(\mathbb{Z}/p)$ is a compact generator for the $K(1)$-local stable homotopy category  $\mathrm{Ho}(L_{K(1)}\mathrm{Sp})$.}
\end{lemma}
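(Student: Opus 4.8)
The plan is to verify separately that $L_{K(1)}M(\mathbb{Z}/p)$ is compact in $\mathrm{Ho}(L_{K(1)}\mathrm{Sp})$ and that it is a generator; write $M=M(\mathbb{Z}/p)$. The basic inputs are that $M$ is finite, hence dualizable in $\mathrm{Ho}(\mathrm{Sp})$ with $DM\simeq\Sigma^{-1}M$, and that $M$ is torsion, so $H\mathbb{Q}\wedge M\simeq\ast$ while $K(1)_{\ast}M\neq 0$. First I would record the standard fact that $K(1)$-localization commutes with smashing by a finite spectrum $F$, and with function spectra out of $F$: the natural maps $(L_{K(1)}X)\wedge F\to L_{K(1)}(X\wedge F)$ and $L_{K(1)}F(F,X)\to F(F,L_{K(1)}X)$ are equivalences. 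Indeed, smashing with $F$ preserves $K(1)_{\ast}$-equivalences (a $K(1)$-acyclic spectrum stays $K(1)$-acyclic after smashing with anything), and the class of $K(1)$-local spectra is closed under (de)suspensions, retracts and cofibre sequences, hence under smashing with a finite complex.

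For compactness, let $\{A_i\}_{i\in I}$ be $K(1)$-local spectra, so their coproduct in $\mathrm{Ho}(L_{K(1)}\mathrm{Sp})$ is $L_{K(1)}(\bigvee_{i}A_i)$. Using the localization adjunction one gets $[L_{K(1)}M,\coprod_{i}A_i]^{L_{K(1)}\mathrm{Sp}}\cong[M,L_{K(1)}(\bigvee_{i}A_i)]^{\mathrm{Sp}}$, whereas $\bigoplus_{i}[L_{K(1)}M,A_i]^{L_{K(1)}\mathrm{Sp}}\cong\bigoplus_{i}[M,A_i]^{\mathrm{Sp}}\cong[M,\bigvee_{i}A_i]^{\mathrm{Sp}}$ since $M$ is compact in $\mathrm{Ho}(\mathrm{Sp})$. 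So it suffices to show that the localization map $\bigvee_{i}A_i\to L_{K(1)}(\bigvee_{i}A_i)$ induces an isomorphism on $[M,-]_{\ast}$, equivalently that $M$ smashed with its fibre $C$ is contractible. This is where chromatic input is needed: every $K(1)$-local spectrum is $E(1)$-local (apply the chromatic fracture square $L_1(-)\simeq L_{K(1)}(-)\times_{L_0 L_{K(1)}(-)}L_0(-)$ to $L_{K(1)}(-)$), and since $L_1$ is smashing (the Smash Product Theorem above) a wedge of $E(1)$-local spectra is again $E(1)$-local, so $\bigvee_{i}A_i$ and $L_{K(1)}(\bigvee_{i}A_i)$ are both $E(1)$-local, hence so is $C$. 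As $C$ is also $K(1)$-acyclic, the same fracture square forces $C$ to be rational, and the torsion finite complex $M$ annihilates every rational spectrum. Therefore $M\wedge C\simeq\ast$, which is the crux, and $L_{K(1)}M$ is compact.

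For generation I would use the criterion of Theorem \ref{Keller}(ii). By Remark \ref{smashing}, $L_{K(1)}\mathbb{S}^0$ is already a generator of $\mathrm{Ho}(L_{K(1)}\mathrm{Sp})$, so it is enough to show that $L_{K(1)}M$ detects isomorphisms. If $f\colon X\to Y$ in $\mathrm{Ho}(L_{K(1)}\mathrm{Sp})$ induces isomorphisms $[L_{K(1)}M,X]_{\ast}\to[L_{K(1)}M,Y]_{\ast}$, let $Z$ be its cofibre; then $[L_{K(1)}M,Z]_{\ast}=0$, and since this group is $\pi_{\ast}(\Sigma^{-1}M\wedge Z)$ we obtain $M\wedge Z\simeq\ast$. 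The cofibre sequence $Z\xrightarrow{\,p\,}Z\to M\wedge Z$ then shows that $p$ acts invertibly on $\pi_{\ast}Z$, so the $K(1)_{\ast}$-module $K(1)_{\ast}Z$, over which $p$ acts as $0$, also has $p$ acting invertibly and hence vanishes; being $K(1)$-local and $K(1)$-acyclic, $Z\simeq\ast$, so $f$ is an isomorphism and $L_{K(1)}M$ generates.

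The main obstacle is the compactness step, specifically the observation that smashing with the Moore spectrum turns an $E(1)$-local spectrum into a $K(1)$-local one: this is what makes $\bigvee_{i}A_i$ — which need not be $K(1)$-local — become $K(1)$-local after smashing with $M$, and it is precisely the feature that distinguishes $L_{K(1)}M$ (compact) from $L_{K(1)}\mathbb{S}^0$ (not compact). It rests on the two non-formal ingredients used above, the Smash Product Theorem and the chromatic fracture square, together with the elementary fact that a torsion finite complex kills rational spectra.
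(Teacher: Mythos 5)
The paper does not supply a proof of this lemma; it simply cites Hovey and Strickland, Theorem 7.3. Your self-contained argument is correct. For compactness you reduce, via the adjunction $[A,Y]^{L_{K(1)}\mathrm{Sp}}\cong[A,L_{K(1)}Y]^{\mathrm{Sp}}$ and the compactness of $M$ in $\mathrm{Ho(Sp)}$, to showing that the fibre $C$ of $\bigvee_i A_i\to L_{K(1)}\bigl(\bigvee_i A_i\bigr)$ has $M\wedge C\simeq\ast$; this follows from the Smash Product Theorem (so a wedge of $E(1)$-local spectra is $E(1)$-local) together with the chromatic fracture square (so an $E(1)$-local, $K(1)$-acyclic spectrum is rational), and from $M$ being finite torsion. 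The generation step is also fine: with compactness established, Theorem \ref{Keller} reduces generation to $L_{K(1)}M$ detecting isomorphisms, and your $p$-inversion trick on the cofibre $Z$ (from $M\wedge Z\simeq\ast$ deduce that $p$ is invertible on $Z$, hence $K(1)_{\ast}Z=0$, hence $Z\simeq\ast$ since $Z$ is $K(1)$-local) does exactly that. The mechanism you isolate, namely that smashing with $M$ carries $E(1)$-local spectra to $K(1)$-local ones, is the same phenomenon the paper later records as Lemma \ref{lemma_rem}, and it is also the engine behind Hovey and Strickland's more general statement for $L_{K(n)}F$ with $F$ finite of type $n$.

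Two presentational remarks. The preliminary ``standard fact'' in your opening paragraph, that $L_{K(1)}$ commutes with $F\wedge(-)$ and $F(F,-)$ for finite $F$, is never actually invoked in the body of the argument and could be dropped. And in the generation step the appeal to Remark \ref{smashing} (that $L_{K(1)}\mathbb{S}^0$ generates) is a non sequitur: the reduction to detecting isomorphisms comes from Theorem \ref{Keller}(ii) together with the compactness of $L_{K(1)}M$ you have just proved, not from $L_{K(1)}\mathbb{S}^0$ being a generator.
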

\begin{defi}
The localisation of a spectrum $X$ with respect to the mod-$p$ Moore spectrum $M(\mathbb{Z}/p)$ is the \emph{$p$-completion} of $X$ denoted $X_p^{\wedge}$, i.e.
\begin{displaymath}
X_p^{\wedge}=L_{M(\mathbb{Z}/p)}X.
\end{displaymath}
If a spectrum is $M(\mathbb{Z}/p)$-local, then we call it a $p$\emph{-complete spectrum.} 
\end{defi}
\begin{prop}\cite[Proposition 2.5]{bousf}\label{mod-p completion}
\begin{itemize}
\item[$\mathrm{(a)}$]For $X\in \mathrm{Ho(Sp)}$, its $p$-completion is the function spectrum:\begin{displaymath}
X_p^{\wedge}=L_{M(\mathbb{Z}/p)}X\simeq F(\Omega M(\mathbb{Z}/p^{\infty}),X),
\end{displaymath}
where $\mathbb{Z}/p^{\infty}$ can be defined as the factor group $\mathbb{Z}[1/p]/ \mathbb{Z}$, or as the colimit of the groups $\mathbb{Z}/p^n$ under multiplication by $p$. Additionally, there is a split short exact sequence 
\begin{displaymath}
 0 \xrightarrow{}\mathrm{Ext}(\mathbb{Z}/p^{\infty},\pi_{\ast}X) \xrightarrow{} \pi_{\ast}(L_{M(\mathbb{Z}/p)}X) \xrightarrow{} \mathrm{Hom}(\mathbb{Z}/p^{\infty},\pi_{\ast -1}X) \xrightarrow{} 0.
 \end{displaymath}
\item[$\mathrm{(b)}$]If the groups $\pi_{\ast}X$ are finitely generated, then  $$\pi_{\ast}( L_{M(\mathbb{Z}/p)}X )\cong \mathbb{Z}^{\wedge}_p\otimes \pi_{\ast}X,$$ where $\mathbb{Z}^{\wedge}_p$ denotes the $p$-adic integers.
\item[$\mathrm{(c)}$]A spectrum $X\in \mathrm{Ho(Sp)}$ is $M(\mathbb{Z}/p)$-local (equivalently p-complete) if and only if the groups $\pi_{\ast}(X)$ are $\mathrm{Ext}$-$p$-complete in the following sense: 
\begin{itemize}
\item[(\rNum{1})] The completion map \begin{displaymath}
\pi_{\ast}(X) \rightarrow \mathrm{Ext}(\mathbb{Z}/p^{\infty}, \pi_{\ast}(X) )
\end{displaymath} is an isomorphism, and
\item[(\rNum{2})] $\mathrm{Hom}(\mathbb{Z}/p^{\infty}, \pi_{\ast}(X))=\ast. $

\end{itemize}
\end{itemize}

\end{prop}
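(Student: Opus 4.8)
First I would pin down the $M(\mathbb{Z}/p)$-acyclics and guess an explicit model for the localisation. The universal coefficient sequence
\[
0 \to \pi_{\ast}(X)/p \to M(\mathbb{Z}/p)_{\ast}X \to {}_{p}\pi_{\ast-1}(X) \to 0
\]
shows that $M(\mathbb{Z}/p)_{\ast}W = 0$ exactly when $\pi_{\ast}(W)$ is uniquely $p$-divisible, i.e.\ a $\mathbb{Z}[1/p]$-module; in particular the telescope $\mathbb{S}^{0}[1/p] = \hocolim(\mathbb{S}^{0}\xrightarrow{p}\mathbb{S}^{0}\xrightarrow{p}\cdots)$ and all its suspensions are $M(\mathbb{Z}/p)$-acyclic. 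The candidate for the localisation is $\widehat{X} := F(\Sigma^{-1}M(\mathbb{Z}/p^{\infty}),X)$, where $M(\mathbb{Z}/p^{\infty}) = \hocolim_{n}M(\mathbb{Z}/p^{n})$ sits in the cofibre sequence $\mathbb{S}^{0}\to\mathbb{S}^{0}[1/p]\to M(\mathbb{Z}/p^{\infty})$ realising $0\to\mathbb{Z}\to\mathbb{Z}[1/p]\to\mathbb{Z}/p^{\infty}\to 0$.

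For (a) there are two checks. First, $\widehat{X}$ is $M(\mathbb{Z}/p)$-local: if $W$ is acyclic then $W\wedge M(\mathbb{Z}/p)\simeq\ast$, hence $W\wedge M(\mathbb{Z}/p^{n})\simeq\ast$ for all $n$ by induction along the cofibre sequences $M(\mathbb{Z}/p)\to M(\mathbb{Z}/p^{n+1})\to M(\mathbb{Z}/p^{n})$, hence $W\wedge M(\mathbb{Z}/p^{\infty})\simeq\ast$ after passing to the homotopy colimit, so $[W,\widehat{X}]_{\ast}\cong[W\wedge\Sigma^{-1}M(\mathbb{Z}/p^{\infty}),X]_{\ast}=0$. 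Second, the natural map $X\to\widehat{X}$ (induced by the boundary map $\Sigma^{-1}M(\mathbb{Z}/p^{\infty})\to\mathbb{S}^{0}$) is an $M(\mathbb{Z}/p)$-equivalence: its fibre is $F(\mathbb{S}^{0}[1/p],X)$, and since $M(\mathbb{Z}/p)$ is finite, smashing with it gives $F\bigl(\mathbb{S}^{0}[1/p],\,X\wedge M(\mathbb{Z}/p)\bigr) = \holim\bigl(\cdots\xrightarrow{p}X\wedge M(\mathbb{Z}/p)\xrightarrow{p}X\wedge M(\mathbb{Z}/p)\bigr)$, the homotopy limit of a tower on which multiplication by $p$ is nilpotent, hence contractible. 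By the universal property of Bousfield localisation, $\widehat{X}\simeq L_{M(\mathbb{Z}/p)}X$. For the short exact sequence I would pick a free resolution $0\to F_{1}\to F_{0}\to\mathbb{Z}/p^{\infty}\to 0$, realise it by a cofibre sequence of wedges of spheres $\mathbb{S}F_{1}\to\mathbb{S}F_{0}\to M(\mathbb{Z}/p^{\infty})$, apply $F(-,X)$, and read off the resulting long exact homotopy sequence, which collapses to
\[
0 \to \mathrm{Ext}(\mathbb{Z}/p^{\infty},\pi_{\ast}X) \to \pi_{\ast}L_{M(\mathbb{Z}/p)}X \to \mathrm{Hom}(\mathbb{Z}/p^{\infty},\pi_{\ast-1}X) \to 0 .
\]
The one genuinely delicate point is the splitting, which rests on the general fact that universal coefficient sequences for maps into a Moore spectrum split (non-canonically).

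Parts (b) and (c) are then algebra. For (b): if $\pi_{\ast}X$ is finitely generated then it has no nontrivial divisible subgroup, so $\mathrm{Hom}(\mathbb{Z}/p^{\infty},\pi_{\ast}X)=0$, while $\mathrm{Ext}(\mathbb{Z}/p^{\infty},-)$ agrees with $\mathbb{Z}^{\wedge}_{p}\otimes(-)$ on cyclic groups and hence on all finitely generated groups, so the sequence collapses to $\pi_{\ast}L_{M(\mathbb{Z}/p)}X\cong\mathbb{Z}^{\wedge}_{p}\otimes\pi_{\ast}X$. For (c): $X$ is $M(\mathbb{Z}/p)$-local iff $X\to\widehat{X}$ is an equivalence iff its fibre $F(\mathbb{S}^{0}[1/p],X)$ is contractible; computing its homotopy via the Milnor sequence for the tower $(\cdots\xrightarrow{p}\pi_{\ast}X\xrightarrow{p}\pi_{\ast}X)$ and identifying $\lim$ with $\mathrm{Hom}(\mathbb{Z}[1/p],-)$ and ${\lim}^{1}$ with $\mathrm{Ext}(\mathbb{Z}[1/p],-)$, contractibility is equivalent to $\mathrm{Hom}(\mathbb{Z}[1/p],\pi_{\ast}X)=\mathrm{Ext}(\mathbb{Z}[1/p],\pi_{\ast}X)=0$; feeding $0\to\mathbb{Z}\to\mathbb{Z}[1/p]\to\mathbb{Z}/p^{\infty}\to 0$ into the six-term $\mathrm{Hom}/\mathrm{Ext}$ exact sequence turns this into precisely the two conditions defining $\mathrm{Ext}$-$p$-completeness of $\pi_{\ast}X$. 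I expect the main obstacle to be the homotopy-theoretic input of (a) --- the acyclicity computations and, above all, the splitting of the universal coefficient sequence --- with (b) and (c) being bookkeeping once (a) is in hand.
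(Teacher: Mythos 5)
This proposition is not proved in the paper at all: it is quoted verbatim from Bousfield's ``The localization of spectra with respect to homology'' (cited as \cite[Proposition 2.5]{bousf}), so there is no internal argument to compare yours against. What you have written is a reconstruction of Bousfield's original proof, and the reconstruction is essentially correct: identifying $L_{M(\mathbb{Z}/p)}X$ with $F(\Sigma^{-1}M(\mathbb{Z}/p^{\infty}),X)$ via the cofibre sequence $\mathbb{S}^{0}\to\mathbb{S}^{0}[1/p]\to M(\mathbb{Z}/p^{\infty})$, checking locality of the target and $M(\mathbb{Z}/p)$-acyclicity of the fibre $F(\mathbb{S}^{0}[1/p],X)$ exactly as you do, extracting the universal coefficient sequence from a free resolution of $\mathbb{Z}/p^{\infty}$, and then deriving (b) and (c) by the $\mathrm{Hom}/\mathrm{Ext}$ algebra you describe. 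All of that is sound, and your index bookkeeping (shift by one between $\mathrm{Ext}(\mathbb{Z}/p^{\infty},\pi_{\ast}X)$ and $\mathrm{Hom}(\mathbb{Z}/p^{\infty},\pi_{\ast-1}X)$) comes out right.

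One point of caution. You dispose of the splitting in (a) by appealing to ``the general fact that universal coefficient sequences for maps into a Moore spectrum split (non-canonically).'' Besides the slip (these are maps \emph{out of} $M(\mathbb{Z}/p^{\infty})$), this is not a blanket fact: the sequence
\begin{displaymath}
0 \to \mathrm{Ext}(G,\pi_{n+1}X) \to [M G,X]_{-n} \to \mathrm{Hom}(G,\pi_{n}X)\to 0
\end{displaymath}
is not known to split for arbitrary abelian groups $G$, and the folklore splitting results one usually invokes are for $X\wedge MG$ rather than $F(MG,X)$. Bousfield's splitting is a statement about $G=\mathbb{Z}/p^{\infty}$ in particular, and the argument uses that both outer terms are $\mathrm{Ext}$-$p$-complete (equivalently, $\mathbb{Z}_{p}^{\wedge}$-modules in a suitable sense) together with a cotorsion-type argument, not a generic splitting principle. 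You correctly flag this as the delicate step, so the structure of your proof is fine, but you should replace the appeal to a ``general fact'' with either a citation to Bousfield--Kan (Chapter VI) or a direct argument exploiting the specific algebra of $\mathbb{Z}/p^{\infty}$; as written, that sentence asserts something stronger than is true.
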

Moreover, $p$-completion can be described as a homotopy limit.
\begin{cor}\label{cor_completion}
As a consequence of Proposition \ref{mod-p completion}, the $p$-completion of a spectrum $X$ is 
$$\displaystyle X^{\wedge}_p\simeq \holim(... \xrightarrow{} M(\mathbb{Z}/p^3)\wedge X \xrightarrow{} M(\mathbb{Z}/p^2)\wedge X\xrightarrow{}M(\mathbb{Z}/p)\wedge X). $$
\end{cor}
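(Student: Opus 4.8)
The plan is to deduce this directly from part~$\mathrm{(a)}$ of Proposition~\ref{mod-p completion} by rewriting the function spectrum $F(\Omega M(\mathbb{Z}/p^{\infty}), X)$ as the homotopy limit of the tower in the statement. The two ingredients are the identification of $M(\mathbb{Z}/p^{\infty})$ with a homotopy colimit of the finite Moore spectra $M(\mathbb{Z}/p^n)$, and Spanier--Whitehead duality for these finite spectra.

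First I would observe that $M(\mathbb{Z}/p^{\infty}) \simeq \hocolim_n M(\mathbb{Z}/p^n)$, where the colimit is taken along the maps $M(\mathbb{Z}/p^n) \to M(\mathbb{Z}/p^{n+1})$ realising the inclusions $\mathbb{Z}/p^n \xrightarrow{\cdot p} \mathbb{Z}/p^{n+1}$: since homology commutes with homotopy colimits and $\mathbb{Z}/p^{\infty} = \mathrm{colim}_n \mathbb{Z}/p^n$, the spectrum $\hocolim_n M(\mathbb{Z}/p^n)$ is connective with $H_0 \cong \mathbb{Z}/p^{\infty}$ and all other integral homology vanishing, hence is a model for $M(\mathbb{Z}/p^{\infty})$. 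Because $\Omega = \Sigma^{-1}$ on spectra and desuspension commutes with homotopy colimits, we get $\Omega M(\mathbb{Z}/p^{\infty}) \simeq \hocolim_n \Sigma^{-1} M(\mathbb{Z}/p^n)$.

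Now I would use that $F(-,X)$ converts homotopy colimits into homotopy limits, together with the fact that each $M(\mathbb{Z}/p^n)$ is a finite (dualisable) spectrum, to obtain
$$F(\Omega M(\mathbb{Z}/p^{\infty}), X) \simeq \holim_n F(\Sigma^{-1} M(\mathbb{Z}/p^n), X) \simeq \holim_n \bigl(\Sigma\, D M(\mathbb{Z}/p^n) \wedge X\bigr).$$
Since $M(\mathbb{Z}/p^n) = \mathrm{cofib}(\mathbb{S}^0 \xrightarrow{p^n} \mathbb{S}^0)$, its Spanier--Whitehead dual is $D M(\mathbb{Z}/p^n) \simeq \mathrm{fib}(\mathbb{S}^0 \xrightarrow{p^n} \mathbb{S}^0) \simeq \Sigma^{-1} M(\mathbb{Z}/p^n)$, so $\Sigma\, D M(\mathbb{Z}/p^n) \wedge X \simeq M(\mathbb{Z}/p^n) \wedge X$, and the structure maps of the tower become the maps $M(\mathbb{Z}/p^{n+1}) \wedge X \to M(\mathbb{Z}/p^n) \wedge X$ dual to those defining $M(\mathbb{Z}/p^{\infty})$. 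Combined with Proposition~\ref{mod-p completion}$\mathrm{(a)}$ this yields $X_p^{\wedge} \simeq \holim_n M(\mathbb{Z}/p^n) \wedge X$.

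The point that needs care is essentially bookkeeping: one must check that the transition maps produced by the above chain of equivalences are the ones intended in the statement, and that the exchanges used ($\Sigma^{-1}$ past $\hocolim$, $F(-,X)$ past $\hocolim$, and $F(M(\mathbb{Z}/p^n),-) \simeq D M(\mathbb{Z}/p^n) \wedge (-)$ for finite $M(\mathbb{Z}/p^n)$) are valid in the model $\mathrm{Sp}$ being used. As an independent check one can argue from part~$\mathrm{(c)}$ instead: each $M(\mathbb{Z}/p^n) \wedge X$ is $M(\mathbb{Z}/p)$-local because its homotopy groups are extensions of the bounded $p$-torsion groups $\pi_{\ast}(X)/p^n$ and $\pi_{\ast-1}(X)[p^n]$, which are $\mathrm{Ext}$-$p$-complete; hence so is the homotopy limit; and the map $X \to \holim_n M(\mathbb{Z}/p^n) \wedge X$ induced by the bottom-cell inclusions is an $M(\mathbb{Z}/p)_{\ast}$-equivalence, since by the defining cofibre sequences $X \xrightarrow{p^n} X \to M(\mathbb{Z}/p^n)\wedge X$ its fibre is $\holim(\cdots \xrightarrow{\cdot p} X \xrightarrow{\cdot p} X)$, on which multiplication by $p$ agrees with the tower's shift and is therefore invertible, so the fibre is $M(\mathbb{Z}/p)$-acyclic. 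The universal property of Bousfield localisation then identifies this homotopy limit with $X_p^{\wedge} = L_{M(\mathbb{Z}/p)}X$.
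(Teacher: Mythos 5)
Your proposal is correct, and it fills in precisely the argument the paper leaves implicit when it states the corollary as ``a consequence of Proposition~\ref{mod-p completion}'': you derive it from part~(a) by writing $M(\mathbb{Z}/p^{\infty})$ as $\hocolim_n M(\mathbb{Z}/p^n)$, dualising the finite Moore spectra, and passing $F(-,X)$ through the colimit. Your secondary check via part~(c) (each $M(\mathbb{Z}/p^n)\wedge X$ has bounded $p$-torsion homotopy hence is $p$-complete, and the fibre of $X\to\holim$ is $M(\mathbb{Z}/p)$-acyclic because $p$ acts invertibly on it) is also sound and is a nice way to confirm the identification without tracking the transition maps through the duality.
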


Another feature of the $K(1)$-localisation that will become useful in the next section is the following result, which enables us to see the $K(1)$-localisation as the $p$-completion of the $E(1)$-localisation.
  % another reference is \cite[Propostion 7.10.(e)]{hovey_strickland_morava}
\begin{lemma} \cite[Proposition 2.11]{bousf} \label{L_K(1)}
For a fixed prime $p$ and $X$ any spectrum in $\mathrm{Ho(Sp)}$, we have 
\begin{displaymath}
L_{K(1)}X=L_{M(\mathbb{Z}/p)}L_1X=(L_1X)^{\wedge}_p.
\end{displaymath}
\end{lemma}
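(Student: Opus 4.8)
The plan is to prove the chain of equalities $L_{K(1)}X = L_{M(\mathbb{Z}/p)}L_1X = (L_1X)^\wedge_p$ by combining the fact that $K(1)$-localisation factors through $E(1)$-localisation with a comparison of Bousfield classes. First I would recall that $K(1)$-locality implies $E(1)$-locality (since $\langle K(1)\rangle \le \langle E(1)\rangle$ in the lattice of Bousfield classes), so that $L_{K(1)}X = L_{K(1)}L_1X$ for every spectrum $X$; this reduces the problem to understanding $L_{K(1)}$ restricted to $E(1)$-local spectra. The heart of the argument is then the Bousfield class identity $\langle K(1)\rangle = \langle E(1)\wedge M(\mathbb{Z}/p)\rangle = \langle L_1 \mathbb{S}^0 \wedge M(\mathbb{Z}/p)\rangle$ when working within the $E(1)$-local category: a spectrum $Y$ that is already $E(1)$-local is $K(1)$-acyclic if and only if it is $M(\mathbb{Z}/p)$-acyclic, because $E(1)$ is (up to a $v_1$-inverted/rational summand) built from $K(1)$ and $K(0)=H\mathbb{Q}$, and smashing with the Moore spectrum kills exactly the rational part. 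This is where I expect to lean on the structure of $E(1)_*$ versus $K(1)_*$ and the fracture/arithmetic square relating $L_1$, $L_{K(1)}$ and rationalisation.

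Concretely, the key steps in order are: (1) establish $L_{K(1)} = L_{K(1)}\circ L_1$ using $\langle K(1)\rangle \le \langle E(1)\rangle$; (2) show that on the image of $L_1$, the localisations $L_{K(1)}$ and $L_{M(\mathbb{Z}/p)}$ agree, by checking they have the same acyclics and the same local objects — an $E(1)$-local spectrum is $K(1)$-local precisely when it is $p$-complete, which follows from the chromatic fracture square expressing $L_1Y$ as the homotopy pullback of $L_{K(1)}Y \to L_{K(1)}L_0 Y \leftarrow L_0 Y$ together with the observation that $p$-completion of an $E(1)$-local spectrum kills the rational homotopy and lands in the $K(1)$-local part; (3) identify $L_{M(\mathbb{Z}/p)}$ with $p$-completion $(-)^\wedge_p$, which is just the Definition already given in the excerpt. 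Assembling (1)–(3) yields $L_{K(1)}X = L_{M(\mathbb{Z}/p)}L_1X = (L_1X)^\wedge_p$.

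The main obstacle is step (2): one must verify both that every $K(1)$-acyclic $E(1)$-local spectrum is $M(\mathbb{Z}/p)$-acyclic and conversely, and that the two classes of local objects inside $\mathrm{Ho}(L_1\mathrm{Sp})$ coincide. The forward direction is relatively clean since $K(1) \simeq K(1)\wedge M(\mathbb{Z}/p)$ up to Bousfield class, so $M(\mathbb{Z}/p)$-acyclic implies $K(1)$-acyclic even before restricting; the subtle direction is that a $K(1)$-acyclic spectrum which is $E(1)$-local must be rational (hence $M(\mathbb{Z}/p)$-acyclic), which uses that the only part of an $E(1)$-local spectrum invisible to $K(1)$ is its $L_0$-localisation. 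I would handle this via the fracture square and a short homotopy-group computation, or simply cite the relevant statement from \cite{bousf}. Everything else — functoriality, the universal property of localisation, and the translation to $p$-completion — is formal.
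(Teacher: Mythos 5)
The paper offers no proof of this lemma; it simply cites Bousfield's Proposition~2.11. Your proposal, based on a Bousfield-class comparison and the chromatic fracture square, is a correct modern reconstruction in outline. The three moves are sound: $\langle K(1)\rangle\le\langle E(1)\rangle$ gives $L_{K(1)}=L_{K(1)}\circ L_1$; the inequality $\langle K(1)\rangle\le\langle M(\mathbb{Z}/p)\rangle$ (since $K(1)_*$ is $p$-torsion, $K(1)_*(W)$ is both $p$-torsion and uniquely $p$-divisible whenever $W$ is $M(\mathbb{Z}/p)$-acyclic, hence zero) shows that $K(1)$-local spectra are $p$-complete; and the fracture square shows that for an $E(1)$-local $Y$ the fiber of $Y\to L_{K(1)}Y$ is rational, hence $M(\mathbb{Z}/p)$-acyclic, so $Y\to L_{K(1)}Y$ is an $M(\mathbb{Z}/p)$-equivalence with $M(\mathbb{Z}/p)$-local target, i.e. it exhibits $L_{K(1)}Y$ as $Y^\wedge_p$.

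One correction is needed. You wrote the fracture square with corner $L_{K(1)}L_0Y$, but the correct corner is $L_0L_{K(1)}Y$ (as in the square the paper itself invokes later in the proof of Lemma~\ref{lemma_rem}). This is not merely cosmetic: $L_{K(1)}L_0Y\simeq\ast$ because rational spectra are $K(1)_*$-acyclic, so a pullback over that corner would degenerate to the product $L_0Y\times L_{K(1)}Y$, which is not $L_1Y$. With the correct corner, the fiber of the top arrow $L_1Y\to L_{K(1)}Y$ agrees with the fiber of the bottom arrow $L_0Y\to L_0L_{K(1)}Y$, which is visibly rational, and the rest of your argument proceeds as you intended. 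I would also suggest replacing the somewhat vague ``compare acyclics and locals inside $L_1\mathrm{Sp}$'' step with the tighter two-line version above (identify the localization map directly via the universal property), since equality of acyclic classes within a subcategory does not by itself identify the two localization functors.
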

We will end this section by talking about the ``$v_1$-self map'' that will be needed later on. 

\begin{defi}
Let $X$ be a $p$-local finite spectrum, and let $n\geq 1$. A $v_n$-\emph{self map} is a map $f:\Sigma ^k X \rightarrow X$ with the following properties:
\begin{enumerate}
\item[(a)] The map $f$ is a $K(n)_{\ast}$-equivalence.
\item[(b)] For $m\neq n$, the induced map $K(m)_{\ast}(X)\rightarrow K(m)_{\ast}(Y) $ is nilpotent.
\end{enumerate}
\end{defi}
\begin{defi}
We say that a $p$-local finite spectrum $X$ has \emph{type n} if  \begin{displaymath}
K(n)_{\ast}(X)\neq 0, \ \text{but} \ K(m)_{\ast}(X)= 0 \ \  \text{for} \ m< n.
\end{displaymath}
\end{defi}
\begin{exam}
A spectrum $X$ has type 0 if \begin{displaymath}
H_{\ast}(X, \mathbb{Q}) \ncong 0,
\end{displaymath}
or equivalently if $H_{i}(X, \mathbb{Z})$ is not a torsion group for all $i$. An example of such a spectrum is the $p$-local sphere $\mathbb{S}^0_{(p)}$.

\end{exam}
\begin{exam}
An example of a spectrum of type 1 is the mod-p Moore spectrum $M(\mathbb{Z}/p)$. To begin with, it has no rational homology  
\begin{displaymath}
K(0)_{\ast}(M(\mathbb{Z}/p))=H_{\ast}(M(\mathbb{Z}/p), \mathbb{Q})=0.
\end{displaymath}
Furthermore, the non-triviality of $K(1)_{\ast}(M(\mathbb{Z}/p))$ can be deduced by considering the cofiber sequence \begin{displaymath}
 \mathbb{S}^0 \xrightarrow{.p} \mathbb{S}^0 \rightarrow M(\mathbb{Z}/p).
 \end{displaymath}
 More precisely, the maps \begin{displaymath}
K(1)_{\ast}(\mathbb{S}^0)\rightarrow K(1)_{\ast}(M(\mathbb{Z}/p))
\end{displaymath}are injections, because multiplication by $p$ kills $K(1)_{\ast}(\mathbb{S}^0)\cong \mathbb{F}_p[v_1,v_1^{-1}]$, hence $K(1)_{\ast}(M(\mathbb{Z}/p))$ cannot be trivial.
 %Moreover, if we look at the cofiber sequence \begin{displaymath}
 %\mathbb{S}^0 \xrightarrow{.p} \mathbb{S}^0 \rightarrow M(\mathbb{Z}/p),
 %\end{displaymath}
%the associated long exact sequence in $K(1)$-homology is of the form 
%\begin{displaymath}
%...\rightarrow  K(1)_{i+1}M(\mathbb{Z}/p)\rightarrow K(1)_{i}\mathbb{S}^0 \xrightarrow{.p}K(1)_{i}\mathbb{S}^0\rightarrow  K(1)_{i}M(\mathbb{Z}/p)\rightarrow %...
%\end{displaymath}
%Since multiplication by $p$ kills $K(1)_{\ast}(\mathbb{S}^0)\cong \mathbb{F}_p[v_1,v_1^{-1}]$, the maps \begin{displaymath}
%%K(1)_{\ast}(\mathbb{S}^0)\rightarrow K(1)_{\ast}(M(\mathbb{Z}/p))
%\end{displaymath} 
%are injections, and $K(1)_{\ast}(M(\mathbb{Z}/p))$ is non-trivial.
\end{exam}
\begin{thm}[Periodicity Theorem]\cite[Chapter 6]{ravenel_orange} \cite[ \textsection 3]{hopkins_smith}
Let $X$ be a finite $p$-local spectrum of type $n$. Then $X$ admits a $v_n$-self map
\begin{displaymath}
v_n^{p^i}: \Sigma ^{p^id}X \rightarrow X, \text{ for some } \ i\geq 0.
\end{displaymath} 
Where $d=0$ if $n=0$, and $d=2p^n-2$ if  $n>0$.

\end{thm}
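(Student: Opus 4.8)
The plan is to follow the strategy of Hopkins and Smith, whose two principal external inputs are the Nilpotence Theorem of Devinatz--Hopkins--Smith (a self-map $f\colon \Sigma^k F\to F$ of a finite spectrum is nilpotent, i.e. $f^{\circ N}$ is null for some $N$, if and only if $K(m)_{\ast}(f)$ is nilpotent for every $0\le m\le \infty$) and the Thick Subcategory Theorem (the only thick subcategories of finite $p$-local spectra are the subcategories $\mathcal{C}_n$ of spectra of type $\ge n$). With these in hand the argument splits into three movements: an asymptotic uniqueness statement for $v_n$-self maps, a reduction of the existence statement to a single complex using thickness, and the analytic heart, the construction of one $v_n$-self map.

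First I would prove the uniqueness statement: if $F,F'$ are type-$n$ complexes carrying $v_n$-self maps $f,g$ and $h\colon F\to F'$ is any map, then after replacing $f,g$ by suitable common powers the square $h\circ f^{a}=g^{b}\circ h$ commutes; in particular any two $v_n$-self maps of $F$ agree up to a power, and $f$ is central in $[F,F]_{\ast}$ up to a power. The mechanism is the Nilpotence Theorem applied to the relevant difference of composites, realised as a self-map of a finite complex built from $F$ and $F'$ (e.g. inside the endomorphism ring spectrum $F\wedge DF$, with $\pi_{\ast}(F\wedge DF)\cong [F,F]_{\ast}$): since $F$ has type $n$ we have $K(m)_{\ast}F=0$, hence $K(m)_{\ast}(F\wedge DF)=0$, for $m<n$; for $m=n$ the group $K(n)_{\ast}(F\wedge DF)$ is a matrix algebra over $K(n)_{\ast}=\mathbb{F}_p[v_n^{\pm 1}]$ on which $f$ acts as a unit times a power of $v_n$, so the difference dies there; and for $n<m\le\infty$ the defining nilpotence of $f$ and $g$ on $K(m)_{\ast}F$ makes the difference nilpotent on $K(m)_{\ast}$. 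Thus the difference is nilpotent on all Morava $K$-theories, hence nilpotent, hence zero after a further power.

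Next, using this uniqueness, I would show that the full subcategory $\mathcal{T}_n$ of finite $p$-local spectra admitting a $v_n$-self map is thick: closure under retracts and suspension is immediate, and closure under cofibres is exactly what uniqueness provides — given $X\to Y\to Z$ with $v_n$-self maps on $X$ and $Y$, pass to large common powers so that these commute with $X\to Y$, extend over the cofibre, and verify via the long exact sequence and $K(m)_{\ast}X=K(m)_{\ast}Y=0$ for $m<n$ that the induced self-map of $Z$ is again a $K(n)_{\ast}$-equivalence and nilpotent on $K(m)_{\ast}$ for $m\ne n$. Since $\mathcal{T}_n$ is thick and contains a type-$n$ complex, the Thick Subcategory Theorem forces $\mathcal{T}_n\supseteq\mathcal{C}_n$, so every type-$n$ complex admits a $v_n$-self map. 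For $n=0$ the required complex is $\mathbb{S}^0_{(p)}$ with multiplication by $p$, which is a rational equivalence and is zero on $K(m)$ for $m\ge 1$ (since $p=0$ there); this also explains the convention $d=0$ in that case.

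The main obstacle is producing one type-$n$ complex with a $v_n$-self map when $n\ge 1$. I would proceed inductively: given a type-$(n-1)$ complex $Y$ with a $v_{n-1}$-self map $g$, the cofibre $Y/g$ has type exactly $n$ (it is $K(n-1)_{\ast}$-acyclic because $g$ is an equivalence there, $K(m)_{\ast}$-acyclic for $m<n-1$ by the long exact sequence, and $K(n)_{\ast}(Y/g)\ne 0$ because $g$ is nilpotent on the nonzero group $K(n)_{\ast}Y$), but equipping $Y/g$ with a $v_n$-self map is not formal. One analyses an Adams-type spectral sequence converging to $[\Sigma^{\ast}(Y/g),Y/g]_{\ast}$; by Landweber's structure theory the spectrum $BP_{\ast}(Y/g)$ carries a $v_n$-action, giving an ``algebraic'' $v_n$-self map at the $E_2$-level, and the point is to show that the class detecting $v_n^{p^i}$ becomes a permanent cycle once $i$ is large enough to lie above the vanishing line of the spectral sequence (a vanishing-line estimate in the tradition of Miller, refined by Hopkins--Smith). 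The passage to a $p$-th power is precisely what converts a potentially non-permanent class into a permanent one, which is why the theorem yields $v_n^{p^i}$ rather than $v_n$ itself. I expect this vanishing-line/permanent-cycle step to be by far the hardest and most technical part of the argument; everything else is bookkeeping around the Nilpotence and Thick Subcategory Theorems.
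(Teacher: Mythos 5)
This theorem is not proved in the paper; it is invoked as a known result with citations to Ravenel's orange book and to Hopkins--Smith, so there is no ``paper's own proof'' to compare against. What you have written is a blind reconstruction of the Hopkins--Smith argument, and as such it is a faithful high-level account: the two external inputs (Nilpotence Theorem and Thick Subcategory Theorem) are the right ones, the asymptotic-uniqueness-via-nilpotence step on $F\wedge DF$ is correct, the reduction of existence to a single example via thickness of $\mathcal{T}_n$ is correct, and the identification of the vanishing-line/permanent-cycle estimate as the analytic heart is accurate.

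One point deserves a caveat. In the construction step you propose to take an arbitrary type-$(n-1)$ complex $Y$ with a $v_{n-1}$-self map $g$, pass to $Y/g$ (correctly observed to have type $n$), and run an Adams spectral sequence on $Y/g$. In the actual Hopkins--Smith proof this does not work for an arbitrary such cofibre: the vanishing-line argument needs tight control over the Steenrod module structure of $H^{\ast}(Y/g;\mathbb{F}_p)$, and this is supplied by Smith's construction of finite complexes whose cohomology is free over an appropriate exterior subalgebra $E(Q_0,\dots,Q_n)$ of the Steenrod algebra (the so-called Smith complexes, built via idempotents in the group algebra of symmetric groups). One then produces the $v_n$-self map on such a carefully chosen complex, and thickness does the rest. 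Your sketch compresses this into ``analyse an Adams-type spectral sequence on $Y/g$,'' which as written would fail because not every $Y/g$ has a usable vanishing line. Since you do flag the permanent-cycle step as the hardest part, this is a matter of precision rather than a wrong approach, but if you were to expand the sketch into a proof you would need Smith's construction as an additional ingredient alongside nilpotence and thickness.
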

By applying this theorem to the mod-2 Moore spectrum, we get the following $v_1$-self map.
\begin{exam}\label{Adams map}
The earliest known periodic map was constructed by \cite{ADAMS_J(X)4}, now known as the Adams map. It is denoted 
\begin{displaymath}
v_1^4:\Sigma^8M(\mathbb{Z}/2)\rightarrow M(\mathbb{Z}/2).
\end{displaymath}
Note that there is no smaller degree $v_1$-self map that can be realised by $M(\mathbb{Z}/2)$. 
\end{exam}

\section{From E(1)-locality to K(1)-locality}\label{E(1) and K(1)}
\begin{notation}
From now on, let $p=2$, and let $\mathrm{Sp}$ denote the category of 2-local spectra. The mod-2 Moore spectrum $M(\mathbb{Z}/2)$ will be denoted by $M$. 
\end{notation}

In \cite{bousf}, a criterion involving the Adams periodic map $v_1^4$ has been developed to show that a spectrum is $E(1)$-local: 
\begin{lemma}\cite[\S 4]{bousf}
A spectrum $X$ is $E(1)$-local if and only if $v_1^4$ induces an isomorphism
\begin{displaymath}
{(v_1^4)}^{\ast}:[M,X]_n^{\mathrm{Sp}}\rightarrow [M,X]_{n+8}^{\mathrm{Sp}}, \ \text{for all} \ n\in \mathbb{Z}.
\end{displaymath}

\end{lemma}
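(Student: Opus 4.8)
The plan is to reduce everything to the cofiber of the Adams map. Set $C:=\mathrm{cofib}\bigl(v_1^4\colon\Sigma^8M\to M\bigr)$, so that there is a distinguished triangle $\Sigma^8M\xrightarrow{v_1^4}M\to C\to\Sigma^9M$ in $\mathrm{Ho}(\mathrm{Sp})$. Applying $[-,X]^{\mathrm{Sp}}$ yields a long exact sequence in each internal degree, and a routine chase shows
\[
(v_1^4)^{\ast}\colon[M,X]^{\mathrm{Sp}}_n\longrightarrow[M,X]^{\mathrm{Sp}}_{n+8}\ \text{is an isomorphism for all }n\in\mathbb Z\ \Longleftrightarrow\ [C,X]^{\mathrm{Sp}}_{\ast}=0.
\]
So it suffices to prove that $X$ is $E(1)$-local if and only if $[C,X]^{\mathrm{Sp}}_{\ast}=0$.

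For the ``only if'' direction I would observe that $C$ is $E(1)$-acyclic. Since $v_1^4$ is a $v_1$-self map on the type $1$ spectrum $M$, the map $K(1)_{\ast}(v_1^4)$ is an isomorphism while $K(0)_{\ast}(v_1^4)$ is vacuously one; concretely $E(1)_{\ast}M\cong E(1)_{\ast}/2\cong\mathbb F_2[v_1^{\pm 1}]$, on which $v_1^4$ acts invertibly. Hence $K(0)_{\ast}C=K(1)_{\ast}C=0$, and since a finite spectrum is $E(1)$-acyclic exactly when it is $K(0)$- and $K(1)$-acyclic, $C$ is $E(1)$-acyclic. Thus if $X$ is $E(1)$-local then $[C,X]^{\mathrm{Sp}}_{\ast}=0$ by the definition of locality.

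For the converse, assume $[C,X]^{\mathrm{Sp}}_{\ast}=0$. First, $C$ is a finite spectrum of \emph{type $2$}: we just saw $K(1)_{\ast}C=0$, whereas $K(2)_{\ast}(v_1^4)=0$ forces $K(2)_{\ast}C\neq 0$. The full subcategory of finite spectra $Y$ with $[Y,X]^{\mathrm{Sp}}_{\ast}=0$ is thick, so by the Hopkins--Smith thick subcategory theorem it contains every finite $2$-local spectrum of type $\geq 2$; passing to Spanier--Whitehead duals, $F\wedge X\simeq\ast$ for every such $F$. This is exactly the statement that $X$ is local for the finite localisation $L_1^{f}$ that annihilates the finite spectra of type $\geq 2$. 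It remains to upgrade $L_1^{f}$-locality to $E(1)$-locality, i.e.\ to identify $L_1^{f}$ with the homological localisation $L_1$: at height $1$ and $p=2$ this is precisely the telescope conjecture, a theorem of Mahowald, and it gives $X\simeq L_1X$, i.e.\ $X$ is $E(1)$-local. One can also finish via the chromatic fracture square: $C_1X:=\mathrm{fib}(X\to L_1X)$ is $E(1)$-acyclic, hence has torsion homotopy groups, and since $L_1$ is smashing, $M\wedge C_1X$ is both $v_1$-periodic and $E(1)$-acyclic; the height $1$ telescope conjecture then forces $M\wedge C_1X\simeq\ast$, and a $2$-local spectrum with torsion homotopy that is $M$-acyclic is contractible, whence $C_1X\simeq\ast$.

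The main obstacle is this final step of the converse. The cofiber long exact sequence, the identification of $C$ as a type $2$ complex, the thick subcategory theorem, and the smashing property of $L_1$ are all formal or standard, and the ``only if'' direction is essentially immediate. What is genuinely deep is excluding a spectrum that is ``telescopically $E(1)$-local'' (that is, $L_1^{f}$-local) without being $E(1)$-local — exactly the height $1$, $p=2$ telescope conjecture, which is the one substantial external input the argument needs.
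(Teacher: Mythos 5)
The paper only \emph{cites} this lemma to Bousfield's \cite[\S 4]{bousf} and does not give its own proof, so there is nothing internal to compare against directly; but your argument is correct, and it is worth saying how it sits relative to what the paper does use. The reduction to showing $[C,X]^{\mathrm{Sp}}_{\ast}=0$ for $C=\mathrm{cofib}(v_1^4)$, the identification of $C$ as a finite spectrum of type $2$, and the resulting ``only if'' direction are all fine. For the converse, your main route stacks two deep theorems: the Hopkins--Smith thick subcategory theorem (to go from $[C,X]_{\ast}=0$ to $L_1^{f}$-locality) and Mahowald's height-$1$, $p=2$ telescope theorem (to identify $L_1^{f}$ with $L_1$). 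This is a clean modern proof, but neither input was available in 1979, so Bousfield's original argument is necessarily more computational, working through the $K$-based Adams spectral sequence and an explicit analysis of $K$-local spectra. Your ``alternative finish'' via chromatic fracture is actually the one that best matches the toolkit the paper itself deploys: in the proof of Lemma~\ref{Ishak_lemma}, the paper uses precisely \cite[Proposition 4.2]{bousf} (that $L_1M$ is the $v_1^4$-telescope on $M$) to show $M\wedge X$ is local, and then assembles the conclusion via $p$-completion and the fracture square from \cite{dwyer2004localizations}. Note that \cite[Proposition 4.2]{bousf} \emph{is} the telescope-conjecture input specialised to $M$, so your diagnosis that this is the one genuinely deep external ingredient is exactly right; the Spanier--Whitehead duality step ($F\wedge X\simeq\ast$) is a harmless detour, since $[F,X]_{\ast}=0$ for all finite type $\geq 2$ spectra already says that $X$ is $L_1^{f}$-local.
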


In this section we extend this result to $K(1)$-locality by adding another condition. First, we need the following lemma.
%\\The mod-2 Moore spectrum $M$ is a torsion spectrum with $K(1)_{\ast}(M)$ being non-trivial. Hence, by \cite[Section 3]{hopkins_smith} it has a ``$v_1$-self map'', i.e. a map $v_1^4:\Sigma^8M\rightarrow M$ that induces an isomorphism in $K(1)_{\ast}$. (The notation might seem misleading at first: $v_1^4$ is not actually the fourth power of an existing map $v_1$. It alludes to the fact that the smallest such map has degree 8, whereas the element $v_1\in K(1)_{\ast}$ has degree 2).
%We can use this to test whether a spectrum is $K(1)$-local.

\begin{lemma}\label{lemma_rem}
For any spectrum $X \in \mathrm{Ho(Sp)}$, we have 
\begin{displaymath}
 L_{K(1)}(M\wedge X)\simeq L_1(M\wedge X).
\end{displaymath}
\end{lemma}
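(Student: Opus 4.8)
The plan is to use Lemma \ref{L_K(1)}, which tells us that $L_{K(1)}Y = (L_1 Y)^{\wedge}_p = L_{M(\mathbb{Z}/p)}L_1 Y$ for any spectrum $Y$; applying this with $Y = M \wedge X$ reduces the claim to showing that $L_1(M\wedge X)$ is already $p$-complete, i.e. $M(\mathbb{Z}/2)$-local. Since a smashing localisation commutes with the functor $M\wedge(-)$ (the Smash product theorem gives $L_1(M\wedge X)\simeq M\wedge X\wedge L_1\mathbb{S}^0 \simeq M\wedge L_1 X$), it suffices to prove that $M\wedge Z$ is $p$-complete whenever $Z$ is $E(1)$-local, or even just that $M\wedge Z$ is $p$-complete for arbitrary $Z$ — the key point being that smashing with the mod-$2$ Moore spectrum produces a spectrum all of whose homotopy groups are bounded $2$-torsion.

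The technical heart is thus a homotopy-group computation via Proposition \ref{mod-p completion}(c): I would show that $\pi_{\ast}(M\wedge Z)$ is $\mathrm{Ext}$-$2$-complete. From the defining cofibre sequence $\mathbb{S}^0\xrightarrow{.2}\mathbb{S}^0\to M$, smashing with $Z$ gives a long exact sequence exhibiting each $\pi_k(M\wedge Z)$ as an extension of the $2$-torsion subgroup of $\pi_{k-1}Z$ by $\pi_k Z/2$; in particular every element of $\pi_{\ast}(M\wedge Z)$ is annihilated by $4$ (indeed by $2$ after the Adams map analysis, but $4$ suffices). A bounded-torsion abelian group $A$ with $2^N A = 0$ satisfies $\mathrm{Hom}(\mathbb{Z}/2^{\infty},A)=0$ and $\mathrm{Ext}(\mathbb{Z}/2^{\infty},A)\cong A$ with the completion map an isomorphism — this is the standard fact that bounded torsion groups are already $\mathrm{Ext}$-$p$-complete, which one checks from the short exact sequence $0\to\mathbb{Z}\to\mathbb{Z}[1/2]\to\mathbb{Z}/2^{\infty}\to 0$ and the vanishing of $\mathrm{Hom}(\mathbb{Z}[1/2],A)$ and $\mathrm{Ext}(\mathbb{Z}[1/2],A)$ for such $A$. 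Hence $M\wedge Z$ is $M(\mathbb{Z}/2)$-local for every $Z$.

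Putting the pieces together: $L_{K(1)}(M\wedge X) = (L_1(M\wedge X))^{\wedge}_2$ by Lemma \ref{L_K(1)}, and since $L_1(M\wedge X)$ is of the form $M\wedge(\text{something})$ it is already $2$-complete by the previous paragraph, so the $2$-completion is the identity up to equivalence, giving $L_{K(1)}(M\wedge X)\simeq L_1(M\wedge X)$.

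The main obstacle I anticipate is nothing deep but rather bookkeeping: one must be careful that $L_1(M\wedge X)$ genuinely remains in the image of $M\wedge(-)$ so that the bounded-torsion argument applies — this is where the smashing property of $L_1$ (the Smash product theorem) is essential, since it lets us rewrite $L_1(M\wedge X)\simeq M\wedge L_1 X$; without smashing this identification would fail and the homotopy groups of $L_{K(1)}(M\wedge X)$ could acquire derived-completion contributions. A secondary point worth stating cleanly is the purely algebraic lemma that bounded-torsion groups are $\mathrm{Ext}$-$p$-complete, which I would either cite or dispatch in two lines.
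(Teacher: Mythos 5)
Your proof is correct, and it follows a genuinely different route from the paper's. The paper invokes the chromatic fracture square of \cite{dwyer2004localizations}, which exhibits $L_1 Y$ as a homotopy pullback of $L_{K(1)}Y$ and $L_0 Y$ over $L_0 L_{K(1)}Y$, and then checks that for $Y = M\wedge X$ both lower corners are contractible (their homotopy groups are $2$-torsion, so they rationalise to a point), forcing the top edge $L_1(M\wedge X)\to L_{K(1)}(M\wedge X)$ to be an equivalence. You instead start from the factorisation $L_{K(1)} = L_{M(\mathbb{Z}/2)}\circ L_1$ of Lemma \ref{L_K(1)}, use the smash product theorem to rewrite $L_1(M\wedge X)\simeq M\wedge L_1 X$, and check via Proposition \ref{mod-p completion}(c) that any spectrum of the form $M\wedge Z$ has bounded $2$-torsion homotopy and is therefore already $2$-complete, so the outer $2$-completion in the factorisation is the identity up to equivalence. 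Both arguments exploit the same underlying observation, that $\pi_{\ast}(M\wedge X)$ is bounded $2$-torsion, but the paper feeds it into a rational-vanishing check inside the fracture square while you feed it into the algebraic $\mathrm{Ext}$-$p$-completeness criterion; your route has the merit of using only tools already stated in the paper and of isolating the pleasant general fact that $M\wedge Z$ is $2$-complete for every $Z$. One small streamlining is available: the smashing property is not strictly needed, since exactness of $L_1$ already gives the cofibre sequence $L_1 X \xrightarrow{\cdot 2} L_1 X \to L_1(M\wedge X)$, from which bounded $2$-torsion of $\pi_{\ast}L_1(M\wedge X)$ follows directly; the paper's proof uses the analogous exactness for $L_{K(1)}$ at the corresponding step.
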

\begin{proof}
By \cite[3.9]{dwyer2004localizations}, we have the following homotopy pullback square
\[
\begin{tikzcd}[column sep=large]
 L_1Y \arrow{r}{} \arrow{d}[left]{}
&{L_{K(1)}Y} \arrow{d} \\
 L_0Y \arrow{r}[swap]{} & L_0L_{K(1)}Y.  
\end{tikzcd}
\]

Therefore, we have that if $L_0Y\simeq\ast$ and $L_{K(1)}Y\simeq\ast$ then $L_1Y \simeq L_{K(1)}Y$. This is the case for 
\begin{displaymath}
Y=X\wedge M(\mathbb{Z}/2):=X/2.
\end{displaymath}
First, let us prove that $L_0(X\wedge M)\simeq\ast$. The long exact homotopy sequence of the exact triangle $$\displaystyle X \xrightarrow{.2} X \xrightarrow{incl} X/2\xrightarrow{}\Sigma X $$ splits into short exact sequences of the form $$\displaystyle 0 \xrightarrow{} (\pi_{m+1}X)\big /2 \xrightarrow{} \pi_{m+1}(X/2)\xrightarrow{} \{\pi_mX\}_2 \rightarrow 0, $$Here $\{\pi_mX\}_2$ denotes the $2$-torsion of the group $\pi_mX$. Since tensoring with $\mathbb{Q}$ preserves exactness, we have 
\begin{displaymath}
\pi_{m+1}(X/2)\otimes \mathbb{Q}\cong 0 \cong \pi_{m+1}(L_0(X/2)),
\end{displaymath}
therefore $L_0(X/2)\simeq\ast$. \\The same applies to $L_0L_{K(1)}(X/2)$. We can see that by tensoring the following short exact sequence with $\mathbb{Q}$ $$\displaystyle 0 \xrightarrow{} (\pi_{m+1}L_{K(1)}X)\big /2 \xrightarrow{} \pi_{m+1}L_{K(1)}(X/2)\xrightarrow{} \{\pi_mL_{K(1)}X\}_2 \rightarrow 0, $$ we will have that $L_0L_{K(1)}(X/2)\simeq\ast$. Hence, $ L_1(M\wedge X)\simeq L_{K(1)}(M\wedge X)$ as desired.
\end{proof}

\begin{rem}
Even though the above lemma is written in the 2-local world, we can replace $p=2$ in the proof by any prime $p$ and the lemma will still be correct in the $p$-local setting.

\end{rem}
\begin{lemma}\label{Ishak_lemma}
A 2-complete spectrum $X$ is $K(1)$-local if and only if $v_1^4$ induces an isomorphism $${(v_1^4)}^{\ast}:[M,X]_n^{Sp}\rightarrow [M,X]_{n+8}^{Sp}$$ for all $n\in \mathbb{Z}$.
\end{lemma}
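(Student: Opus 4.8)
The plan is to bootstrap from Bousfield's $E(1)$-locality criterion (the Lemma immediately above) by feeding in the two extra ingredients that are available to us: the $2$-completeness hypothesis on $X$, and the identity $L_{K(1)}X\simeq (L_1X)^{\wedge}_2$ of Lemma \ref{L_K(1)}. In outline: the $v_1^4$-condition is, by Bousfield, exactly $E(1)$-locality, and $E(1)$-locality together with $2$-completeness is exactly what is needed to promote this to $K(1)$-locality via Lemma \ref{L_K(1)}.

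First I would dispatch the easy implication. If $X$ is $K(1)$-local, then $X$ is in particular $E(1)$-local: as recalled in the introduction, $K(1)$-locality implies $E(1)$-locality, and at $p=2$ one sees this concretely from $E(1)/2\simeq K(1)$, which gives that every $E(1)_{\ast}$-acyclic spectrum is $K(1)_{\ast}$-acyclic, hence (reversing inclusions) every $K(1)$-local spectrum is $E(1)$-local. Now Bousfield's Lemma applies to the $E(1)$-local spectrum $X$ and says exactly that $(v_1^4)^{\ast}$ is an isomorphism on $[M,X]^{\mathrm{Sp}}_n$ for every $n\in\mathbb{Z}$.

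For the converse, assume $(v_1^4)^{\ast}$ is an isomorphism on $[M,X]^{\mathrm{Sp}}_{\ast}$. By Bousfield's Lemma, $X$ is $E(1)$-local, i.e. the localisation map $X\to L_1X$ is a weak equivalence. Substituting this into Lemma \ref{L_K(1)} gives
\[
L_{K(1)}X\ \simeq\ (L_1X)^{\wedge}_2\ \simeq\ X^{\wedge}_2 .
\]
But the $2$-completeness hypothesis says precisely that $X$ is $M$-local, i.e. that the completion map $X\to X^{\wedge}_2=L_{M}X$ is a weak equivalence as well. Composing the two localisation maps shows that $X\to L_{K(1)}X$ is a weak equivalence, so $X$ is $K(1)$-local; equivalently, $X$ is weakly equivalent to the $K(1)$-local spectrum $L_{K(1)}X$, and $K(1)$-locality is stable under weak equivalence.

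I do not expect a serious obstacle: the entire content is packaged into Bousfield's $E(1)$-criterion and the factorisation $L_{K(1)}\simeq (-)^{\wedge}_2\circ L_1$ of Lemma \ref{L_K(1)}. The one point deserving a line of care is that one should argue with the genuine localisation/completion maps (or invoke invariance of Bousfield locality under weak equivalence) rather than with abstract equivalences, so that "$L_{K(1)}X\simeq X$'' really yields "$X$ is $K(1)$-local''. If a more hands-on route is preferred, the hypothesis can be rephrased by Spanier--Whitehead duality (using $F(M,X)\simeq \Sigma^{-1}M\wedge X$, since $M$ is self-dual up to a shift) as the statement that $v_1^4$ acts invertibly on $\pi_{\ast}(X\wedge M)$, at which point Lemma \ref{lemma_rem} identifies $L_{K(1)}(X\wedge M)$ with $L_1(X\wedge M)$; but the direct argument above is shorter, so that is the one I would write up.
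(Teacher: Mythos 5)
Your proof is correct and takes a genuinely shorter route than the paper's. Both directions hinge on the same two inputs --- Bousfield's $v_1$-periodicity criterion at height $1$ and the factorisation $L_{K(1)}\simeq(-)^{\wedge}_2\circ L_1$ of Lemma \ref{L_K(1)} --- but you apply the Bousfield criterion to $X$ itself to get ``$X$ is $E(1)$-local'' in one stroke, and then $2$-completeness finishes matters immediately. The paper, by contrast, never invokes the quoted Bousfield lemma on $X$ directly. Its forward direction is a bare-hands argument with the cofibre $V(1)$ of $v_1^4$ and the resulting long exact sequence; its backward direction works at the level of $M\wedge X$: it identifies $M\wedge X$ with the $v_1^4$-telescope (hence with $L_1(M\wedge X)$ by \cite[Proposition 4.2]{bousf} and smashingness of $L_1$, and then with $L_{K(1)}(M\wedge X)$ via Lemma \ref{lemma_rem}), bootstraps by induction up the Moore tower to see $M(\mathbb{Z}/2^n)\wedge X$ is $K(1)$-local, and finally recognises $X=X^{\wedge}_2$ as the homotopy limit of that tower. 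Your version buys brevity by treating the $E(1)$-locality criterion as a black box applied to the possibly-infinite spectrum $X$; the paper's version buys self-containedness and only ever needs the telescope theorem applied to the finite spectrum $M$, never a locality criterion for $X$ itself. Both rely ultimately on the height-$1$ telescope theorem (Miller--Mahowald), which is exactly what makes ``$[V(1),X]_{\ast}=0$'' equivalent to full $E(1)$-locality. One small thing worth retaining from your own caveat paragraph: to conclude ``$X$ is $K(1)$-local'' you should track that the composite $X\to L_1X\to(L_1X)^{\wedge}_2\simeq L_{K(1)}X$ is the natural $K(1)$-localisation map and that each leg is a weak equivalence (the second because $L_1X\simeq X$ is $2$-complete), rather than just asserting an abstract equivalence $X\simeq L_{K(1)}X$; with that line added, the argument is airtight.
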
 
\begin{proof}
First, suppose that the spectrum $X$ is $K(1)$-local. As we have seen, the map $v_1^4$ induces a $K(1)_{\ast}$-isomorphism on $M$, thus its cofibre $V(1)$ is $K(1)_{\ast}$-acyclic. The desired isomorphism is deduced from the long exact sequence 
$$\displaystyle ... \xrightarrow{} [V(1),X]_{n+1} \xrightarrow{}[M,X]_{n} \xrightarrow{{(v_1^4)}^{\ast}} [\Sigma ^ 8 M,X]_{n} \rightarrow [V(1),X]_{n} \rightarrow ..., $$ 
since by hypothesis $[V(1),X]_{n}=0$ for all $n$ .

To prove the other direction, we first note that the assumption is equivalent to $$(v_1^4\wedge X)^{\ast}:\pi_n(M\wedge X)\rightarrow \pi_{n+8}(M\wedge X)$$ being  an isomorphism for all $n$ because $M$ is its own Spanier-Whitehead dual up to suspension  $DM(\mathbb{Z}/p)\simeq \Omega M(\mathbb{Z}/p)$. We conclude that $$\hocolim ( \displaystyle M\wedge X\xrightarrow{v_1^4\wedge X} \Sigma^{-8}M \wedge X \xrightarrow{v_1^4\wedge X}\Sigma^{-16}M \wedge X\xrightarrow{}...) \simeq M\wedge X$$ because all the arrows are weak equivalences. However, by \cite[Proposition 4.2]{bousf} $L_1M$ is the homotopy colimit of the sequence formed by the self-map on $M(\mathbb{Z}/2)$, i.e. we have that $$\hocolim ( \displaystyle M\xrightarrow{v_1^4} \Sigma^{-8}M \xrightarrow{v_1^4}\Sigma^{-16}M\xrightarrow{v_1^4}...) \simeq L_1M.$$
 We conclude that in our case, $$M\wedge X\simeq (L_1M)\wedge X$$
since unlike $\holim$, $\hocolim$ commutes with the smash product ``$\wedge$''. Therefore, $$M\wedge X \simeq (L_1M)\wedge X \simeq L_1(M\wedge X),$$ because $L_1$ is smashing. On the other hand, by Lemma \ref{lemma_rem} 
\begin{displaymath}
L_1(M\wedge X)\simeq L_{K(1)}(M\wedge X) .
\end{displaymath}
  We conclude that $M\wedge X $ is $K(1)$-local.\\By induction, we prove that $M(\mathbb{Z}/2^n)\wedge X$ is $K(1)$-local for all $n$. The octahedral axiom provides us with the following exact triangle in $\mathrm{Ho(Sp)}$
  \begin{displaymath}
   M(\mathbb{Z}/2)\wedge X \rightarrow M(\mathbb{Z}/2^n)\wedge X \rightarrow M(\mathbb{Z}/2^{n-1})\wedge X \rightarrow \Sigma M(\mathbb{Z}/2)\wedge X.
  \end{displaymath}
  If we suppose that $M(\mathbb{Z}/2^{n-1}) \wedge X$ is $K(1)$-local, then \cite[Lemma 1.4]{bousf} tells us that $M(\mathbb{Z}/2^{n}) \wedge X$ is $K(1)$-local as well. By Corollary \ref{cor_completion}, the 2-completion of $X$ denoted $X^{\wedge}_2$ is the homotopy limit of$$ \displaystyle ...\xrightarrow{} M(\mathbb{Z}/2^n) \wedge X \xrightarrow{}...\xrightarrow{} M(\mathbb{Z}/2^2) \wedge X \xrightarrow{}M(\mathbb{Z}/2)\wedge X  .$$
 Since every term of the above sequence is $K(1)$-local, the spectrum  $X^{\wedge}_2$ is $K(1)$-local \cite[Proposition 1.7]{ravenel1984localization}. As $X$ is 2-complete, this must mean that $X$ itself is $K(1)$-local.
\end{proof}
\section{The Quillen functor pair}
In order to obtain a Quillen equivalence between $L_{K(1)}\mathrm{Sp}$ and $\mathcal{C}$, we first need a Quillen adjunction between those categories. Forgetting the $K(1)$-local structure, Quillen adjunctions between spectra $\mathrm{Sp}$ and any stable model category have been studied first in \cite{schwede2002uniqueness} and were later generalised in \cite{lenhardt2012stable}.
\begin{thm}\cite[Section 6]{lenhardt2012stable}\\Let $\mathcal{C}$ be a stable model category and $X\in \mathcal{C}$ a fibrant and cofibrant object. Then there is a Quillen adjunction $$X\wedge -:\mathrm{Sp}\rightleftarrows \mathcal{C}:\mathrm{Hom}(X,-)$$such that $X\wedge \mathbb{S}^0\simeq X$.
\end{thm}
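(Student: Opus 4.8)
The plan is to build the Quillen pair in two stages: first a Quillen adjunction between pointed simplicial sets and $\mathcal{C}$, then a prolongation along the stabilisation $\mathrm{sSet}_{\ast}\to\mathrm{Sp}$. The conceptual point is that $\mathrm{Sp}$ is, up to technicalities, the free stable model category on one generator --- it is obtained from $\mathrm{sSet}_{\ast}$ by formally inverting $S^{1}\wedge-$ --- so since $\mathcal{C}$ is \emph{already} stable, a Quillen functor out of $\mathrm{sSet}_{\ast}$ meets no obstruction to being extended along this localisation. The machine driving the first stage is the theory of framings: for any cofibrant object $X$ of a model category there is a cosimplicial frame $X^{\bullet}$ \cite[Ch.~5]{hovey2007model}, and since $X$ is in addition fibrant one may take the frame to be homotopically well behaved.

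From the frame I would extract the Quillen adjunction
\[
X\wedge-\colon\mathrm{sSet}_{\ast}\rightleftarrows\mathcal{C}\colon\mathrm{Hom}(X,-),
\]
where $X\wedge K$ is the coend of $X^{\bullet}$ against $K$ (so that $X\wedge S^{0}\cong X$) and $\mathrm{Hom}(X,-)$ is the associated mapping-space functor, the Quillen condition coming from the pushout--product axiom the frame satisfies. Writing $\Sigma_{\mathcal{C}}=(-)\wedge S^{1}$ for the resulting suspension, stability of $\mathcal{C}$ says precisely that its derived functor is invertible on $\mathrm{Ho}(\mathcal{C})$. I would then prolong: extending $X^{\bullet}$ to a \emph{stable frame} --- roughly, an $\Omega$-spectrum object in $\mathcal{C}$ that models $X$ and carries a frame at each level --- one sends a spectrum $E=(E_{n},\sigma_{n})$ to an object weakly equivalent to
\[
X\wedge E\;\simeq\;\hocolim\bigl(X\wedge E_{0}\to\Omega_{\mathcal{C}}(X\wedge E_{1})\to\Omega_{\mathcal{C}}^{2}(X\wedge E_{2})\to\cdots\bigr),
\]
the transition maps being induced by the adjoints $X\wedge E_{n}\to\Omega_{\mathcal{C}}(X\wedge E_{n+1})$ of the structure maps $\Sigma_{\mathcal{C}}(X\wedge E_{n})\to X\wedge E_{n+1}$, and $\mathrm{Hom}(X,-)$ is the corresponding mapping-spectrum functor. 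This is Lenhardt's stable-frame construction \cite[\S 6]{lenhardt2012stable}; packaged through the universal property of stabilisation it is the functor of \cite[5.1]{schwede2002uniqueness}.

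Next I would check that the pair is Quillen. Since $\mathrm{Sp}$ is cofibrantly generated by the maps $F_{n}(i)$, $F_{n}(j)$ (with $i,j$ generating (trivial) cofibrations of $\mathrm{sSet}_{\ast}$) together with the stabilisation maps $\zeta_{n}^{K}\colon F_{n+1}(S^{1}\wedge K)\to F_{n}(K)$, it suffices to see that $X\wedge-$ sends these to (trivial) cofibrations of $\mathcal{C}$: on the levelwise generators this reduces to the first stage, while each $\zeta_{n}^{K}$ is sent to a weak equivalence of $\mathcal{C}$, which is exactly where stability of $\mathcal{C}$ is used. (Equivalently, one verifies that $\mathrm{Hom}(X,-)$ preserves fibrations and trivial fibrations, using that it lands among $\Omega$-spectra.) Finally, $X\wedge\mathbb{S}^{0}\simeq X$ falls out of the construction: since $\mathbb{S}^{0}_{n}=S^{n}$ with identity structure maps, the diagram defining $X\wedge\mathbb{S}^{0}$ is
\[
X\;\longrightarrow\;\Omega_{\mathcal{C}}\Sigma_{\mathcal{C}}X\;\longrightarrow\;\Omega_{\mathcal{C}}^{2}\Sigma_{\mathcal{C}}^{2}X\;\longrightarrow\;\cdots,
\]
whose maps are iterated loops of the units $\Sigma_{\mathcal{C}}^{n}X\to\Omega_{\mathcal{C}}\Sigma_{\mathcal{C}}^{n+1}X$ --- each a weak equivalence because $\mathcal{C}$ is stable --- so the homotopy colimit is $X$.

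I expect the main obstacle to be the prolongation: turning the formulas above into an \emph{honest} point-set Quillen functor with the stated derived behaviour, rather than a zig-zag of functors defined only up to homotopy. This forces one to set up the framing data with care --- a fibrant, associative, ``stable'' frame --- so that the iterated loop functors that appear are genuinely derived and all of the coherence squares commute on the nose; this is the technical heart of \cite{lenhardt2012stable}, and the place where fibrancy of $X$ (not merely cofibrancy) enters. Everything else is a routine reduction to the case of pointed simplicial sets together with the invertibility of the suspension on $\mathrm{Ho}(\mathcal{C})$.
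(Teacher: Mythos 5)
The paper does not prove this statement: it cites it directly to Lenhardt \cite[\S 6]{lenhardt2012stable} and to the universal property of spectra in \cite[5.1]{schwede2002uniqueness}, so there is no in-paper argument to compare against. Your sketch does follow the strategy of the cited source --- build the unstable framing adjunction $\mathrm{sSet}_{\ast}\rightleftarrows\mathcal{C}$ from a cosimplicial frame on $X$, then promote it to a stable frame and prolong along $\mathrm{sSet}_{\ast}\to\mathrm{Sp}$ using stability of $\mathcal{C}$ --- and you correctly identify the technical heart (getting an honest point-set Quillen pair rather than a zig-zag). Two small cautions: the formula $X\wedge E\simeq\hocolim_{n}\Omega_{\mathcal{C}}^{n}(X\wedge E_{n})$ describes the \emph{derived} functor, not the point-set left adjoint, which in Lenhardt's construction is a coend against the stable frame (you cannot build a left adjoint out of the right adjoint $\Omega_{\mathcal{C}}$ at the point-set level); and the maps $\zeta_{n}^{K}$ are not themselves among the generating trivial cofibrations of the stable Bousfield--Friedlander structure --- those are pushout-products of such maps with generating cofibrations --- though the conclusion you draw from them (that stability of $\mathcal{C}$ makes the extension from the levelwise to the stable structure Quillen) is the right one. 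With those caveats understood, your account is an accurate high-level summary of the argument the paper is outsourcing.
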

\begin{notation}
 The left derived functor of $X\wedge -:\mathrm{Sp}\rightarrow \mathcal{C}$ is denoted \mbox{$X\wedge ^L - :\mathrm{Ho(Sp)}\rightarrow \mathrm{Ho}(\mathcal{C})$}, and $\mathrm{RHom}(X,-):\mathrm{Ho}(\mathcal{C})\rightarrow\mathrm{Ho(Sp)} $ denotes the right derived functor of $\mathrm{Hom}(X,-)$.
\end{notation}
Looking at $\mathrm{Sp}$ and $L_{K(1)}\mathrm{Sp}$ as categories, they are the same, however they have different model structures. We would like to show that this construction also gets us a Quillen adjunction between $K(1)$-local spectra $L_{K(1)}\mathrm{Sp}$ as follows 
\[
\begin{tikzcd}[column sep=large]
\mathrm{Sp} \arrow{r}{X\wedge -} \arrow{d}[left]{\mathrm{Id}}
&{\mathcal{C}} \\
 L_{K(1)}\mathrm{Sp}. \arrow{ur}[swap]{X\wedge -} & 
\end{tikzcd}
\]

By \cite[Proposition 7.8]{barnes_roitzheim_local}, this is the case if and only if the spectrum $\mathrm{RHom}(X,Y)$ is $K(1)$-local for all $Y\in \mathcal{C}$.

For the rest of the paper, let $\Phi  :  \mathrm{Ho}(L_{K(1)}\mathrm{Sp}) \rightarrow \mathrm{Ho}(\mathcal{C})$ be an equivalence of triangulated categories, and $X$ a fibrant-cofibrant replacement of $\Phi (L_{K(1)}\mathbb{S}^0)$. In order to show that  $\mathrm{RHom}(X,Y)$ is $K(1)$-local for all $Y$, we use Lemma \ref{Ishak_lemma}. However, before being able to apply Lemma \ref{Ishak_lemma}, we need to prove that $\mathrm{RHom}(X,Y)$ is $2$-complete, to that end we use Proposition \ref{mod-p completion}.
%\begin{prop}
%A spectrum $X \in \mathrm{Ho(Sp)}$ is 2-complete if and only if the groups $\pi_{\ast}X$ are Ext-2-complete.
%\end{prop}
\begin{lemma}
The spectrum $\mathrm{RHom}(X,Y)$ is 2-complete  for all $Y\in \mathcal{C}$.
\end{lemma}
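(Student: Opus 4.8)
The plan is to use Proposition \ref{mod-p completion}(c): to show $\mathrm{RHom}(X,Y)$ is $2$-complete, it suffices to show its homotopy groups are $\mathrm{Ext}$-$2$-complete, i.e.\ that $\mathrm{Hom}(\mathbb{Z}/2^{\infty}, \pi_{\ast}\mathrm{RHom}(X,Y)) = 0$ and that the completion map $\pi_{\ast}\mathrm{RHom}(X,Y) \to \mathrm{Ext}(\mathbb{Z}/2^{\infty}, \pi_{\ast}\mathrm{RHom}(X,Y))$ is an isomorphism. The key observation is that $\pi_n \mathrm{RHom}(X,Y) \cong [\mathbb{S}^n, \mathrm{RHom}(X,Y)]^{\mathrm{Sp}} \cong [X \wedge^L \mathbb{S}^n, Y]^{\mathcal{C}} \cong [\Sigma^n X, Y]^{\mathcal{C}}$ by the derived adjunction, and since $X$ is a fibrant-cofibrant replacement of $\Phi(L_{K(1)}\mathbb{S}^0)$ and $\Phi$ is a triangulated equivalence, this is $[\Sigma^n \Phi(L_{K(1)}\mathbb{S}^0), Y]^{\mathcal{C}} \cong [\Phi(\Sigma^n L_{K(1)}\mathbb{S}^0), Y]^{\mathcal{C}}$. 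Now $Y \cong \Phi(Z)$ for a unique $Z \in \mathrm{Ho}(L_{K(1)}\mathrm{Sp})$, so the group is $[\Sigma^n L_{K(1)}\mathbb{S}^0, Z]^{L_{K(1)}\mathrm{Sp}} \cong [\Sigma^n \mathbb{S}^0, Z]^{\mathrm{Sp}}_{L_{K(1)}} \cong \pi_n(Z)$ where $Z$ is a $K(1)$-local spectrum. So the homotopy groups of $\mathrm{RHom}(X,Y)$ are the homotopy groups of some $K(1)$-local spectrum $Z$.

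Thus the statement reduces to: the homotopy groups of a $K(1)$-local spectrum are $\mathrm{Ext}$-$2$-complete. First I would recall (Lemma \ref{L_K(1)}) that $Z \simeq L_{K(1)}Z = (L_1 Z)^{\wedge}_2$, so $Z$ is in particular $2$-complete (it is $M(\mathbb{Z}/2)$-local, being a $2$-completion of something). Then by Proposition \ref{mod-p completion}(c) applied directly to $Z$, its homotopy groups are $\mathrm{Ext}$-$2$-complete. Since $\pi_{\ast}\mathrm{RHom}(X,Y) \cong \pi_{\ast}Z$ as graded abelian groups, $\mathrm{RHom}(X,Y)$ has $\mathrm{Ext}$-$2$-complete homotopy groups, hence is itself $2$-complete by the same proposition.

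The only subtlety — and the step I'd be most careful about — is the identification $\pi_n \mathrm{RHom}(X,Y) \cong \pi_n Z$, which rests on the chain of adjunction and equivalence isomorphisms being natural enough to be applied objectwise; one should check the derived adjunction $[X\wedge^L A, Y]^{\mathcal{C}} \cong [A, \mathrm{RHom}(X,Y)]^{\mathrm{Sp}}$ and that $X \wedge^L -$ agrees with $\Phi$ on the subcategory generated by $\mathbb{S}^0$ (this follows from $X \wedge^L \mathbb{S}^0 \simeq X \simeq \Phi(L_{K(1)}\mathbb{S}^0)$ together with both functors being exact and coproduct-preserving, but for the present lemma we only need it on the spheres $\Sigma^n \mathbb{S}^0$, which is immediate). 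Alternatively, and perhaps more cleanly, one can bypass the comparison with $\Phi$ entirely: it is enough to note that $\mathrm{RHom}(X,Y)$ for $Y\in\mathcal{C}$ fibrant is the mapping spectrum into an object of a stable model category, and its homotopy groups $[\Sigma^n X, Y]^{\mathcal{C}}$ receive, via the equivalence $\Phi^{-1}$, an identification with maps in $\mathrm{Ho}(L_{K(1)}\mathrm{Sp})$, whose target-wise $2$-completeness is exactly Proposition \ref{mod-p completion}(c) combined with Lemma \ref{L_K(1)}. Either way, no hard computation is needed; the content is entirely in correctly threading the adjunction through the triangulated equivalence.
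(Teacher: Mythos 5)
Your proposal is correct and follows essentially the same route as the paper: both use the derived adjunction and the triangulated equivalence $\Phi$ to identify $\pi_{\ast}\mathrm{RHom}(X,Y)$ with $\pi_{\ast}L_{K(1)}\Phi^{-1}(Y)$, then invoke Lemma \ref{L_K(1)} together with Proposition \ref{mod-p completion}(c) to conclude these groups are $\mathrm{Ext}$-$2$-complete. The naturality concerns you flag are real but, as you note, only need to be checked on sphere shifts, where they are immediate.
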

\begin{proof}
By Proposition \ref{mod-p completion} (c), in order to prove that the spectrum $\mathrm{RHom}(X,Y)$ is 2-complete  for all $Y\in \mathcal{C}$, it is enough to show that the groups $\pi_{\ast}(\mathrm{RHom}(X,Y))$ are Ext-$2$-complete. However, the latter fact is the same as the following equivalent statements: 
\begin{itemize}
%\item[] The groups $[\mathbb{S}^0, \mathrm{RHom}(X,Y)]^{\mathrm{Sp}}$ are Ext-2-complete
\item[(\romannumeral 1)] The groups $[X\wedge ^{L}\mathbb{S}^0,Y]_{\ast}^{\mathcal{C}}$ are Ext-2-complete, because the pair ($X\wedge ^L -,\mathrm{RHom}(X,-)$) is an adjunction.
 \item[(\romannumeral 2)] The groups $[X,Y]_{\ast}^{\mathcal{C}}$ are Ext-2-complete since $X\wedge^L\mathbb{S}^0\cong X$.
 \item[(\romannumeral 3)] The groups $[\Phi ^{-1}(X),\Phi ^{-1}(Y)]_{\ast}^{L_{K(1)}\mathrm{Sp}}$ are Ext-2-complete ($\Phi$ is an equivalence of categories).
  \item[(\romannumeral 4)] The groups $[\mathbb{S}^0,\Phi ^{-1}(Y)]_{\ast}^{L_{K(1)}\mathrm{Sp}}$ are Ext-2-complete because $\Phi ^{-1}(X) \cong L_{K(1)}\mathbb{S}^0$.
  \item[(\romannumeral 5)] The groups $[\mathbb{S}^0,L_{K(1)} \Phi ^{-1}(Y)]_{\ast}^{\mathrm{Sp}}$ are Ext-2-complete, as a consequence of the isomorphism $[X,Y]^{L_{K(1)}\mathrm{Sp}} \cong [X,L_{K(1)}Y]^{\mathrm{Sp}}$.\end{itemize}
The last statement is the same as saying that the spectrum $L_{K(1)} \Phi ^{-1}(Y)$ is 2-complete, which is indeed true, since by Lemma \ref{L_K(1)} we have 
\begin{displaymath}
L_{K(1)}\Phi ^{-1}(Y)=\Big(L_1 \Phi ^{-1}(Y)\Big)^{\wedge}_2. \qedhere
\end{displaymath}  
  
\end{proof}

%We can use the above proposition to show that $\mathrm{RHom}(X,Y)$ is $2$-complete  for all $Y\in \mathcal{C}$. Actually, the fact that $\mathrm{RHom}(X,Y)$ is $2$-complete is equivalent to saying that the groups $[X\wedge ^{L}\mathbb{S}^0,Y]_{\ast}^{\mathcal{C}}$ are Ext-2-complete, because of the adjunction. By keeping in mind the facts that $X\wedge^L\mathbb{S}^0\simeq X$ and that $\Phi$ is an equivalence of categories, we see that in order to show that $\mathrm{RHom}(X,Y)$ is 2-complete, it is sufficient to show that $[\mathbb{S}^0,L_{K(1)}\Phi^{-1}(Y)]_{\ast}^{Sp}$ are Ext-2-complete, in other words that is equivalent to saying that $L_{K(1)}\Phi^{-1}(Y)$ is 2-complete, which is indeed true, since $L_{K(1)}\Phi^{-1}(Y)\simeq \left(L_1 \Phi^{-1}(Y)\right)^{\wedge}_2$. Actually, the $K(1)$-localisation is the $p$-completion of the $E(1)$-localisation. Therefore, we conclude that $\mathrm{RHom}(X,Y)$ is 2-complete.
\begin{lemma} \label{Image of M}
For the mod-2 Moore spectrum $M$, we have \begin{displaymath}
X\wedge^L M \cong \Phi (L_{K(1)}M) \cong  \Phi (M).
\end{displaymath}
%as both are the cofibre of the multiplication by 2 on the element $X$. 
\end{lemma}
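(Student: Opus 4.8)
The plan is to exploit the fact that $X \wedge^L -$ is a triangulated (exact) functor, and to track the defining cofibre sequence of $M$ through it. Recall that $M$ sits in the distinguished triangle
\[
\mathbb{S}^0 \xrightarrow{\ \cdot 2\ } \mathbb{S}^0 \xrightarrow{\ \mathrm{incl}\ } M \xrightarrow{\ \mathrm{pinch}\ } \Sigma \mathbb{S}^0
\]
in $\mathrm{Ho}(L_{K(1)}\mathrm{Sp})$. Applying the exact functor $X \wedge^L -$ and using $X \wedge^L \mathbb{S}^0 \cong X$, we obtain a triangle $X \xrightarrow{\cdot 2} X \to X \wedge^L M \to \Sigma X$ in $\mathrm{Ho}(\mathcal{C})$, so $X \wedge^L M$ is the cofibre of multiplication by $2$ on $X$. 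On the other side, $\Phi$ is exact and $\Phi(L_{K(1)}\mathbb{S}^0) \cong X$ (by the choice of $X$ as a fibrant-cofibrant replacement of $\Phi(L_{K(1)}\mathbb{S}^0)$), so applying $\Phi$ to the same triangle in $\mathrm{Ho}(L_{K(1)}\mathrm{Sp})$ gives a triangle $X \xrightarrow{\cdot 2} X \to \Phi(L_{K(1)}M) \to \Sigma X$. Since cofibres in a triangulated category are determined up to (non-canonical) isomorphism by the map, and the two maps $X \xrightarrow{\cdot 2} X$ agree, we conclude $X \wedge^L M \cong \Phi(L_{K(1)}M)$. One subtlety to address is that multiplication by $2$ on $X$ coming from the two constructions really is the same endomorphism: this holds because both are the image of the ring element $2 \in [\mathbb{S}^0,\mathbb{S}^0]$ under additive functors, and additive functors send $n \cdot \mathrm{id}$ to $n \cdot \mathrm{id}$.

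For the second isomorphism, $\Phi(L_{K(1)}M) \cong \Phi(M)$, the point is simply that $L_{K(1)}M$ and $M$ become the same object in $\mathrm{Ho}(L_{K(1)}\mathrm{Sp})$ — more precisely, the localisation map $M \to L_{K(1)}M$ is a $K(1)_{\ast}$-equivalence, hence an isomorphism in $\mathrm{Ho}(L_{K(1)}\mathrm{Sp})$, so $\Phi$ sends it to an isomorphism. (In the notation of the paper, $[-,-]^{L_{K(1)}\mathrm{Sp}}$ is computed after applying $L_{K(1)}$, so $M$ and $L_{K(1)}M$ are literally isomorphic in that homotopy category.) Composing the two isomorphisms yields the claim.

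I do not expect a serious obstacle here; the statement is essentially a formal consequence of exactness of both functors applied to the Moore-spectrum triangle, together with $X \wedge^L \mathbb{S}^0 \cong X$ and $\Phi(L_{K(1)}\mathbb{S}^0) \cong X$. The only place requiring a little care is the identification of the two "multiplication by $2$" self-maps of $X$ so that the cofibres can be matched; this is where one invokes additivity of the functors rather than any deeper property. If one wanted to be maximally careful about the non-canonicity of cofibres, one could instead build the isomorphism $X \wedge^L M \to \Phi(L_{K(1)}M)$ directly via the axiom $(\mathrm{TR3})$ (the fill-in axiom) applied to the identity map on the diagram $X \xrightarrow{\cdot 2} X$, which produces a morphism of triangles that is automatically an isomorphism.
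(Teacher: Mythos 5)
Your argument is correct and is essentially the same as the paper's, which likewise identifies both $X\wedge^L M$ and $\Phi(L_{K(1)}M)$ as cofibres of multiplication by $2$ on $X$ (using $X\wedge^L\mathbb{S}^0\cong X\cong\Phi(L_{K(1)}\mathbb{S}^0)$) and deduces the second isomorphism from $M\cong L_{K(1)}M$ in $\mathrm{Ho}(L_{K(1)}\mathrm{Sp})$. You have merely spelled out the details (additivity identifying the two multiplication-by-$2$ maps, the fill-in axiom for matching cofibres) that the paper leaves implicit.
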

\begin{proof}
From the isomorphisms \begin{displaymath}
X\wedge^L \mathbb{S}^0 \cong X   \cong  \Phi (L_{K(1)}\mathbb{S}^0),
\end{displaymath}
we can see that both $X\wedge^L M$ and $\Phi (L_{K(1)}M)$ are the cofibre of the multiplication by 2 on the element $X$, hence we have 
\begin{displaymath}
X\wedge^L M\cong \Phi (L_{K(1)}M).
\end{displaymath}
As for the last isomorphism of the lemma, it is a consequence of the isomorphism  $$M\cong L_{K(1)}M$$ in the homotopy category $\mathrm{Ho}(L_{K(1)} \mathrm{Sp})$.
\end{proof}

In order to show that $\mathrm{RHom}(X,Y)$ is $K(1)$-local for all $Y$, we use Lemma \ref{Ishak_lemma}.

\begin{lemma}\label{correction1}
The map $$ (v_1^4)^{\ast}:[M,\mathrm{RHom}(X,Y)]^{\mathrm{Sp}}_n\rightarrow [M,\mathrm{RHom}(X,Y)]^{\mathrm{Sp}}_{n+8}$$ is an isomorphism for all $n \in \mathbb{Z}$ and all $Y \in \mathcal{C}$.
\end{lemma}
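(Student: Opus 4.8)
The plan is to use the adjunction to transport the statement back to the $K(1)$-local world and there exploit that $L_{K(1)}\Phi^{-1}(Y)$ is a fibrant object, hence $K(1)$-local. First I would rewrite both sides of the map using the adjunction $(X\wedge^L -, \mathrm{RHom}(X,-))$ together with Lemma \ref{Image of M}: since $M$ is self-Spanier--Whitehead-dual up to suspension, $[M, \mathrm{RHom}(X,Y)]^{\mathrm{Sp}}_n \cong [\Sigma^n M \wedge^L \mathbb{S}^0, \mathrm{RHom}(X,Y)]^{\mathcal{C}}$, and this should be massaged (using that $X\wedge^L -$ is a left adjoint, so it takes the cofibre sequence defining $M$ to the cofibre sequence defining $X\wedge^L M$) into $[\Sigma^n(X\wedge^L M), Y]^{\mathcal{C}}$. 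By Lemma \ref{Image of M} this is $[\Sigma^n \Phi(L_{K(1)}M), Y]^{\mathcal{C}}$, and since $\Phi$ is a triangulated equivalence this equals $[\Sigma^n L_{K(1)}M, \Phi^{-1}(Y)]^{L_{K(1)}\mathrm{Sp}} \cong [\Sigma^n M, \Phi^{-1}(Y)]^{L_{K(1)}\mathrm{Sp}}$, which by the description of morphisms in $\mathrm{Ho}(L_{K(1)}\mathrm{Sp})$ equals $[\Sigma^n M, L_{K(1)}\Phi^{-1}(Y)]^{\mathrm{Sp}} = [M, L_{K(1)}\Phi^{-1}(Y)]^{\mathrm{Sp}}_n$.

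The next step is to check that under this chain of isomorphisms the map induced by $(v_1^4)^{\ast}$ on $[M,\mathrm{RHom}(X,Y)]^{\mathrm{Sp}}_{\bullet}$ corresponds precisely to the map induced by $(v_1^4)^{\ast}$ on $[M, L_{K(1)}\Phi^{-1}(Y)]^{\mathrm{Sp}}_{\bullet}$. This is a naturality check: $v_1^4:\Sigma^8 M \to M$ is a fixed map of spectra, all the isomorphisms above are natural in the first variable, and the equivalence $\Phi$ is triangulated (in particular commutes with suspension up to coherent natural isomorphism), so precomposition with $v_1^4$ is preserved at each stage. I would spell this out just enough to be convincing.

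Finally, having identified the map in question with $(v_1^4)^{\ast}:[M, L_{K(1)}\Phi^{-1}(Y)]^{\mathrm{Sp}}_n \to [M, L_{K(1)}\Phi^{-1}(Y)]^{\mathrm{Sp}}_{n+8}$, I would invoke the ``easy'' direction of the $E(1)$-local criterion of \cite{bousf} (stated just before Lemma \ref{lemma_rem}): since $L_{K(1)}\Phi^{-1}(Y)$ is $K(1)$-local it is in particular $E(1)$-local, and therefore $v_1^4$ induces an isomorphism on $[M,-]^{\mathrm{Sp}}_{\bullet}$ for all $n$. (Alternatively one can use the forward direction of Lemma \ref{Ishak_lemma} directly, since $L_{K(1)}\Phi^{-1}(Y)$ is $K(1)$-local.) This completes the argument.

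The main obstacle I anticipate is not any single deep input — all the hard analytic work is already done in Lemma \ref{Ishak_lemma} and in \cite{bousf} — but rather the bookkeeping in the first two steps: correctly threading the cofibre sequence for $M$ through the left adjoint $X\wedge^L -$, keeping track of suspensions, and verifying that $(v_1^4)^{\ast}$ really does transport to $(v_1^4)^{\ast}$ under the composite isomorphism. One subtlety to be careful about is that $M$ appears on both the "spectra" side (as a finite spectrum one smashes with) and inside $\mathrm{Ho}(L_{K(1)}\mathrm{Sp})$ (where $M\cong L_{K(1)}M$), and the duality $DM \simeq \Omega M$ must be used to move $M$ from the mapping-in slot to the smash slot; getting these identifications to line up compatibly with $v_1^4$ is the only place where care is genuinely required.
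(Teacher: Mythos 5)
Your proposal has a genuine gap at exactly the place you flag as a ``naturality check.'' The chain of isomorphisms you set up includes the identification $X\wedge^L M \cong \Phi(L_{K(1)}M)$ from Lemma \ref{Image of M}. That is an isomorphism of \emph{objects}, not a natural isomorphism of functors: there is no a priori natural transformation $X\wedge^L(-) \Rightarrow \Phi(L_{K(1)}(-))$, and constructing one is essentially the content of the rigidity theorem itself. Consequently, when you transport $(v_1^4)^{\ast}$ across this identification, you do \emph{not} automatically land on $(v_1^4)^{\ast}$ on the other side. Concretely, under the ring isomorphism $[X\wedge^L M, X\wedge^L M]_8^{\mathcal{C}} \cong [M,M]_8^{L_{K(1)}\mathrm{Sp}}$, the element $X\wedge^L v_1^4$ need not map to $v_1^4 = \Phi^{-1}(\Phi(v_1^4))$; a priori they are different self-maps of $X\wedge^L M$. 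Your argument, as written, silently assumes they coincide, which would make the whole rigidity problem trivial.

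The paper confronts this head-on. It reduces (by adjunction) to showing $(X\wedge^L v_1^4)^{\ast}$ is an isomorphism on $[X\wedge^L M, Y]_{\ast}^{\mathcal{C}}$, observes that $\Phi(v_1^4)^{\ast}$ is an isomorphism (this part is like your argument and is correct), and then explicitly compares the two elements $X\wedge^L v_1^4$ and $\Phi(v_1^4)$ in the endomorphism ring $[M,M]_8^{L_{K(1)}\mathrm{Sp}}$. Using $[M,M]_{\ast}^{L_{K(1)}\mathrm{Sp}} = [M,M]_{\ast}^{L_1\mathrm{Sp}}$ (Lemma \ref{lemma_rem}) and the calculation of \cite[Section 3.2]{roitzheim2007rigidity}, one gets $X\wedge^L v_1^4 = \Phi(v_1^4) + \Phi(T)$ for some $T$ with $2T = 0$, and then a further argument (via Remark \ref{sphere} and the homotopy of $L_{K(1)}\mathbb{S}^0$) shows that every such $v_1^4 + T$ is still an isomorphism. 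This computational comparison is the heart of the lemma; without it the proof does not go through.
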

\begin{rem} \label{sphere} 
Before we proceed to the proof of the above lemma, we need to know the homotopy groups of the $K(1)$-local sphere in degrees 0 till 9. By \cite [Theorem 4.3]{bousf} and Lemma \ref{L_K(1)}, we can see that the $K(1)$-local sphere is the fiber of the Adams operation $\Psi^3 - 1$ on $KO\mathbb{Z}_2$, where $KO\mathbb{Z}_2$ is the 2-adic real $K$-theory spectrum. Therefore the long exact sequence produced by the fiber sequence 
\begin{displaymath}
L_{K(1)}\mathbb{S}^0 \rightarrow KO\mathbb{Z}_2 \xrightarrow{\Psi^3 - 1} KO\mathbb{Z}_2
\end{displaymath} 
 provides us with values of $\pi_n(L_{K(1)}\mathbb{S}^0)$ at $p=2$. On the other hand, the long exact sequence provided by the homotopy pullback square \[
\begin{tikzcd}[column sep=large]
 L_1Y \arrow{r}{} \arrow{d}[left]{}
&{L_{K(1)}Y} \arrow{d} \\
 L_0Y \arrow{r}[swap]{} & L_0L_{K(1)}Y 
\end{tikzcd}
\]
tells us that 2-locally, we have \begin{displaymath}
\pi_n(L_{K(1)}\mathbb{S}^0) \cong \pi_n(L_{1} \mathbb{S}^0), \ \text{for} \ n\neq -2,\  -1, 0.
\end{displaymath} 
\pagebreak
The final result from degree -2 till 9 reads as follows, see e.g. \cite[Theorem 8.15]{ravenel1984localization}. 
\begin{center}
\begin{table}[!h]
           %% not "\fontsize{12}{12}\selectfont"
    \caption{}\label{table}
    \centering    %% not "\center{...}"
    \begin{tabular}{|c|c|}
    \hline
    $n$&$\pi_n(L_{K(1)}\mathbb{S}^0)$\\     %% no "&" at start of row
    \hline
    $-2$& $0$ \\
    \hline
     $-1$& $\mathbb{Z}_{2}^{\wedge}$ \\
    \hline
    $0$& $\mathbb{Z}_{2}^{\wedge} \{ \iota \} \oplus \mathbb{Z}/2 \{y_0 \}$ \\
    \hline
    $1$& $\mathbb{Z}/2 \{\eta, y_1 \}$ \\
    \hline   
    $2$& $\mathbb{Z}/2 \{\eta ^2 \}$ \\
    \hline 
    $3$& $\mathbb{Z}/8 \{ \nu \}$ \\
    \hline 
    $4$& $0$ \\
    \hline 
    $5$& $0$ \\
    \hline 
    $6$& $0$ \\
    \hline 
    $7$& $\mathbb{Z}/16 \{ \sigma \}$ \\
    \hline 
    $8$& $\mathbb{Z}/2 \{ \eta \sigma \}$ \\
    \hline 
    $9$& $\mathbb{Z}/2 \{ \eta ^2 \sigma, \mu \}$ \\
    \hline 
         %% extra \hline at bottom of table
    \end{tabular}
  \end{table}
\end{center}

The element $y_0$ is the unique element of order 2 of $\pi_0(L_{K(1)}\mathbb{S}^0)$, and $y_1=\eta y_0$ is a generator of the second summand in $\pi_1(L_{K(1)}\mathbb{S}^0)$. As for the other elements of $\pi_n(L_{K(1)}\mathbb{S}^0)$, we give them the names of their preimages in $\pi_n(\mathbb{S}^0)$. Moreover, we have the following relations, (\cite[Theorem 8.15(d)]{ravenel1984localization} 
\begin{displaymath}
4\nu = \eta ^3, \ \eta y_1=0, \ y_0 ^2= 0, \ y_1^2=0, \ \sigma y_1=0 \ \  \text{and} \ \  \mu y_0= \eta^2 \sigma.
\end{displaymath}

\end{rem}
Now, we can move on to proving Lemma \ref{correction1}, namely that the mod-2 homotopy groups of $\mathrm{RHom}(X,Y)$ are $v_1^4$-periodic for all $Y\in \mathcal{C}$.
\begin{proof}
By adjunction, it suffices to prove that $$(X\wedge^L v_1^4)^{\ast}:[X\wedge^LM,Y]_n^{\mathcal{C}} \rightarrow [X\wedge^LM,Y]_{n+8}^{\mathcal{C}}$$ is an isomorphism for all integers $n$. We know that \begin{displaymath}
{(v_1^4)}^{\ast} : [M,\Phi ^ {-1}(Y)]_n^{L_{K(1)}\mathrm{Sp}} \rightarrow [M,\Phi ^ {-1}(Y)]_{n+8}^{L_{K(1)}\mathrm{Sp}}
\end{displaymath}
is an isomorphism for all $n$,  but by Lemma \ref{Image of M}, this means that
\begin{displaymath}
\Phi(v_1^4) ^{\ast}: [X\wedge^LM,Y]_n^{\mathcal{C}}\rightarrow [X\wedge^LM,Y]_{n+8}^{\mathcal{C}}
\end{displaymath}
is an isomorphism as well. Therefore, to show that $(X\wedge^L v_1^4)^{\ast}$ is an isomorphism, one compares the elements $(X\wedge^L v_1^4)$ and $\Phi (v_1^4)$ in the endomorphism ring $$[X\wedge ^LM,X\wedge ^LM]_8^{\mathcal{C}} \cong [M,M]_8^{L_{K(1)}\mathrm{Sp}}.$$
But since $[M,M]_{\ast}^{L_{K(1)}\mathrm{Sp}}=[M,M]_{\ast}^{L_1\mathrm{Sp}}$ by Lemma \ref{lemma_rem}, we can use the calculation done in \cite[Section 3.2]{roitzheim2007rigidity} for the $E(1)$-local case to show that \begin{displaymath}
X\wedge^L v_1^4= \Phi (v_1^4) + \Phi (T),\ \text{for some} \ T\in [M,M]_{8}^{L_{K(1)}\mathrm{Sp}}, \ 2T=0.
\end{displaymath} 
However, by Remark \ref{sphere} and \cite[Section 3.2]{roitzheim2007rigidity} we can see that all such $v_1^4+T$ are isomorphims in $\mathrm{Ho}(L_{K(1)}\mathrm{Sp})$. Hence, $(X\wedge^L v_1^4)^{\ast}$ is also an isomorphism. 
\end{proof}
Since we have seen that $\mathrm{RHom}(X,Y)$ is 2-complete, we can make use of the last lemma with Lemma \ref{Ishak_lemma} to deduce the following:
\begin{cor}\label{K(1)-local}
The spectrum $\mathrm{RHom} (X,Y)$ is $K(1)$-local for all $Y$. Thus, $$X\wedge -: L_{K(1)}\mathrm{Sp} \rightleftarrows \mathcal{C}:\mathrm{Hom}(X,-)$$ is a Quillen adjunction. \qed
\end{cor}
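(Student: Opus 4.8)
The plan is to assemble the pieces already established and invoke the characterisation of $K(1)$-locality from Section \ref{E(1) and K(1)}. First, recall that by the preceding lemma the spectrum $\mathrm{RHom}(X,Y)$ is $2$-complete for every $Y \in \mathcal{C}$; this is precisely the hypothesis needed to apply Lemma \ref{Ishak_lemma}. That lemma says a $2$-complete spectrum is $K(1)$-local if and only if $v_1^4$ acts invertibly on its mod-$2$ homotopy groups, i.e. the map
\[
(v_1^4)^{\ast} : [M, \mathrm{RHom}(X,Y)]^{\mathrm{Sp}}_n \to [M, \mathrm{RHom}(X,Y)]^{\mathrm{Sp}}_{n+8}
\]
is an isomorphism for all $n \in \mathbb{Z}$. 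But this is exactly the content of Lemma \ref{correction1}, which has just been proved by transporting the $v_1^4$-periodicity statement across the equivalence $\Phi$ and comparing $X \wedge^L v_1^4$ with $\Phi(v_1^4)$ in the endomorphism ring $[M,M]^{L_{K(1)}\mathrm{Sp}}_8$. Combining the two, $\mathrm{RHom}(X,Y)$ is $K(1)$-local for all $Y \in \mathcal{C}$.

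For the second assertion, I would invoke the criterion of \cite[Proposition 7.8]{barnes_roitzheim_local} recalled just before Corollary \ref{K(1)-local}: the Quillen adjunction $X \wedge - : \mathrm{Sp} \rightleftarrows \mathcal{C} : \mathrm{Hom}(X,-)$ descends along the identity functor $\mathrm{Sp} \to L_{K(1)}\mathrm{Sp}$ to a Quillen adjunction $X \wedge - : L_{K(1)}\mathrm{Sp} \rightleftarrows \mathcal{C} : \mathrm{Hom}(X,-)$ precisely when the right adjoint, derived, lands in the $K(1)$-local objects, i.e. when $\mathrm{RHom}(X,Y)$ is $K(1)$-local for every $Y \in \mathcal{C}$. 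Since we have just verified this, the desired Quillen adjunction on $K(1)$-local spectra follows immediately.

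There is essentially no obstacle left at this stage: all the real work has been done in the two lemmas feeding into the corollary, and the corollary itself is a bookkeeping step that records their consequence. If anything, the only point requiring a moment's care is making sure the hypotheses of \cite[Proposition 7.8]{barnes_roitzheim_local} are met — namely that $L_{K(1)}\mathrm{Sp}$ is a left Bousfield localisation of $\mathrm{Sp}$ (which it is, by construction in Section 2) and that $X \wedge -$ is already a left Quillen functor out of $\mathrm{Sp}$ (which holds by \cite[Section 6]{lenhardt2012stable}). Granting those, the proof is a two-line citation, which is why the statement is marked with \qed in the excerpt rather than given a separate proof block.
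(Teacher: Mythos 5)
Your proposal is correct and follows exactly the paper's argument: the corollary is the assembly of the $2$-completeness lemma, Lemma \ref{correction1} on $v_1^4$-periodicity, and the characterisation of $K(1)$-locality from Lemma \ref{Ishak_lemma}, with the Quillen adjunction then following from \cite[Proposition 7.8]{barnes_roitzheim_local} as the paper notes in the paragraph preceding the corollary.
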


\section{The Quillen equivalence} \label{equivalence}
As before, let  $\Phi  :  \mathrm{Ho}(L_{K(1)}\mathrm{Sp}) \rightarrow \mathrm{Ho}(\mathcal{C})$ be an equivalence of triangulated categories. After constructing the Quillen adjunction $$X\wedge-:L_{K(1)}\mathrm{Sp}\rightleftarrows \mathcal{C}:\mathrm{Hom}(X,-)$$ where $X\simeq \Phi (L_{K(1)}\mathbb{S}^0)$, our goal now is to prove that this Quillen adjunction is indeed a Quillen equivalence. To this end, we first start by looking at the homotopy type of the spectrum $\mathrm{RHom}(X,X\wedge^LM)$. Note that in the $E(1)$-local case in \cite{roitzheim2007rigidity}, the author investigated the homotopy type of $\mathrm{RHom}(X,X\wedge^L\mathbb{S}^0)$. The reason behind it is that in $L_1\mathrm{Sp}$, the sphere spectrum is a compact generator, while in $L_{K(1)}\mathrm{Sp}$, the Moore spectrum $M$ is a compact generator, and $\mathbb{S}^0$ is just a generator. Everything mentioned and the reason why we are looking at a compact generator will become apparent when we will be proving the equivalence.\\As we have seen in Corollary \ref{K(1)-local}, the spectrum $\mathrm{RHom}(X,X\wedge^LM)$ is $K(1)$-local. Therefore, by the universal property of localisation \cite[Proposition 1.5]{ravenel1984localization}, the adjoint of the identity map factors over $L_{K(1)}M$
\[
\begin{tikzcd}
M \arrow{r}{} \arrow{d}{\mathrm{Id}}
&{\mathrm{RHom}(X,X\wedge^LM)} \\
 L_{K(1)}M \arrow[dotted]{ur}[below]{\lambda} & 
\end{tikzcd}
\]
\begin{prop}\label{isomorphism}
The map $\lambda:L_{K(1)}M\rightarrow \mathrm{RHom}(X,X\wedge^LM)$ is a $\pi_{\ast}$-isomorphism.
\end{prop}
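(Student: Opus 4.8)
The plan is to use that $M$ is a compact generator of $\mathrm{Ho}(L_{K(1)}\mathrm{Sp})$ and thereby reduce the statement to a computation in the graded endomorphism ring of that generator. By Corollary \ref{K(1)-local}, both $L_{K(1)}M$ and $\mathrm{RHom}(X,X\wedge^{L}M)$ are $K(1)$-local, so for a map between them the properties of being a $\pi_{*}$-isomorphism, being a $K(1)_{*}$-equivalence, and being an isomorphism in $\mathrm{Ho}(L_{K(1)}\mathrm{Sp})$ all coincide (a $K(1)$-local $Z$ satisfies $Z\simeq L_{K(1)}Z$). Since $M$ detects isomorphisms (Theorem \ref{Keller}, Remark \ref{rem_generator}), it suffices to prove that
\[ \lambda_{*}\colon [M,L_{K(1)}M]^{L_{K(1)}\mathrm{Sp}}_{*}\longrightarrow [M,\mathrm{RHom}(X,X\wedge^{L}M)]^{L_{K(1)}\mathrm{Sp}}_{*} \]
is an isomorphism of graded groups.

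First I would identify both sides. The source is $[M,M]^{L_{K(1)}\mathrm{Sp}}_{*}$, which by Lemma \ref{lemma_rem} equals $[M,M]^{L_{1}\mathrm{Sp}}_{*}$: this is a small, $8$-periodic graded ring, whose fundamental domain in degrees $0$ through $7$ is computed explicitly in \cite[Section 3.2]{roitzheim2007rigidity} (and is recoverable from Table \ref{table} via the cofibre sequence $L_{K(1)}\mathbb{S}^{0}\xrightarrow{\cdot 2}L_{K(1)}\mathbb{S}^{0}\to L_{K(1)}M$), and whose periodicity generator is the invertible degree-$8$ Adams self-map $v_{1}^{4}$. For the target, the Quillen adjunction of Corollary \ref{K(1)-local} gives $[M,\mathrm{RHom}(X,X\wedge^{L}M)]^{L_{K(1)}\mathrm{Sp}}_{*}\cong [X\wedge^{L}M,X\wedge^{L}M]^{\mathcal{C}}_{*}$, and Lemma \ref{Image of M} together with $X\simeq\Phi(L_{K(1)}\mathbb{S}^{0})$ and the full faithfulness of $\Phi$ identifies this, via a graded ring isomorphism, with $[M,M]^{L_{K(1)}\mathrm{Sp}}_{*}$ again. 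Under these identifications $\lambda_{*}$ becomes the map induced by the derived left adjoint $X\wedge^{L}-$ on endomorphisms of $M$, i.e.\ a unital graded ring endomorphism $\phi$ of $[M,M]^{L_{K(1)}\mathrm{Sp}}_{*}$ (here one uses that post-composition with an adjunction unit corresponds, under the adjunction isomorphism, to applying the left adjoint to morphisms, and that a functor respects composition and identities). So the proposition is equivalent to the statement that $\phi$ is an isomorphism.

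The remaining, and main, task is to show $\phi$ is an isomorphism. Two pieces of information about $\phi$ are already available: $\phi(\mathrm{id}_{M})=\mathrm{id}_{M}$, and by the computation in the proof of Lemma \ref{correction1}, $\phi(v_{1}^{4})=v_{1}^{4}+T$ with $2T=0$, again an invertible degree-$8$ element. Multiplying by this unit reduces surjectivity of $\phi$ in all degrees to surjectivity on the fundamental domain; there each group is a finite $2$-group and source and target are abstractly isomorphic degreewise, so surjectivity forces bijectivity. It then remains to verify, degree by degree across the fundamental domain, that the handful of multiplicative generators lie in the image of $\phi$, using the relations listed in Remark \ref{sphere} and \cite[Section 3.2]{roitzheim2007rigidity}; this bookkeeping — pinning $\phi$ down from the data $\phi(\mathrm{id}_{M})=\mathrm{id}_{M}$ and $\phi(v_{1}^{4})=v_{1}^{4}+T$ — is the genuine obstacle, and the only place where the explicit low-degree $2$-primary homotopy really enters. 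An alternative, slightly cleaner route is to first show that $\lambda'\colon L_{K(1)}\mathbb{S}^{0}\to\mathrm{RHom}(X,X)$ is a $\pi_{*}$-isomorphism — by the same argument applied to the ring $\pi_{*}L_{K(1)}\mathbb{S}^{0}$ of Table \ref{table} — and then deduce the statement for $\lambda$ by the five lemma applied to the map of cofibre sequences obtained by applying $\mathrm{RHom}(X,X\wedge^{L}-)$ and $L_{K(1)}(-)$ to $\mathbb{S}^{0}\xrightarrow{\cdot 2}\mathbb{S}^{0}\to M$.
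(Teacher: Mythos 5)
Your overall strategy matches the paper's: reduce to showing that the induced map on $[M,M]_{*}^{L_{K(1)}\mathrm{Sp}}$ is an isomorphism, exploit the $8$-periodicity coming from $v_{1}^{4}$, and settle a finite low-degree computation. (Your initial reduction, via compact generation and the equivalence of ``$\pi_{*}$-iso'' with ``iso in $\mathrm{Ho}(L_{K(1)}\mathrm{Sp})$'' for $K(1)$-local objects, is a clean variant of the paper's reduction to mod-$2$ homotopy using that all groups in sight are $2$-torsion.) However, both of your routes gloss over the step that actually carries the weight, and one of them has a gap as phrased.

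On the main route: knowing $\phi(\mathrm{id}_{M})=\mathrm{id}_{M}$ and $\phi(v_{1}^{4})=v_{1}^{4}+T$ does \emph{not} pin down the unital graded ring endomorphism $\phi$ of $[M,M]_{*}^{L_{K(1)}\mathrm{Sp}}$, because that ring has additional multiplicative generators not generated by these two elements (it is a nontrivial $\pi_{*}L_{K(1)}\mathbb{S}^{0}$-algebra; see \cite[Section 3.2]{roitzheim2007rigidity}). So calling the remaining step ``bookkeeping --- pinning $\phi$ down from that data'' misdescribes what must be done. What the paper actually does is pass to the sphere: it shows that $\psi=\Phi^{-1}\circ(X\wedge^{L}-)$ on $[\mathbb{S}^{0},\mathbb{S}^{0}]_{n}^{L_{K(1)}\mathrm{Sp}}$ is an isomorphism for $n=0,\dots,9$, which is the substantive input and is exactly the content of \cite[Lemma 3.4]{roitzheim2007rigidity} combined with the computation of $\pi_{n}L_{K(1)}\mathbb{S}^{0}$ in Remark \ref{sphere}. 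The generators $\eta,\nu,\sigma,y_{0},\mu$ and the relations in Table \ref{table} are what constrain $\psi$, not the two values of $\phi$ you list. That input is then transported to $M$ by two applications of the five lemma along the cofibre sequence $\mathbb{S}^{0}\xrightarrow{\cdot 2}\mathbb{S}^{0}\to M$, and only after that does the $8$-periodicity of $[M,M]_{*}$ enter to extend from a fundamental domain to all degrees.

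On the alternative route: the claim that $\lambda'\colon L_{K(1)}\mathbb{S}^{0}\to\mathrm{RHom}(X,X)$ is a $\pi_{*}$-isomorphism ``by the same argument'' does not directly work, because the sphere has no $v_{1}$-self map, so there is no periodicity allowing you to reduce the sphere statement to a fundamental domain. The paper never asserts (nor needs) that $\lambda'$ is a $\pi_{*}$-iso in all degrees; it only establishes the sphere isomorphism in degrees $0$ through $9$ and then propagates as above. So your alternative route has the right shape (sphere first, then five lemma to $M$), but the order of operations must be: low-degree sphere computation $\Rightarrow$ low-degree Moore computation (five lemma) $\Rightarrow$ all-degree Moore computation (periodicity). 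The sphere-in-all-degrees statement is not available along the way.
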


\begin{proof}
As all the homotopy groups involved are torsion, it is enough to show that $\lambda$ induces an isomorphism of mod-$2$ homotopy groups. In other words, we need to show that $\lambda_{\ast}$ in the following commutative diagram is an isomorphism.

\[
\begin{tikzcd}
{[M,L_{K(1)}M]_{\ast}^{\mathrm{Sp}}=[M,M]_{\ast}^{L_{K(1)}\mathrm{Sp}}} \arrow{r}{\lambda_{\ast}} \arrow{d}[left]{X\wedge^L-}
&{[M,\mathrm{RHom}(X,X\wedge^LM)]_{\ast}^{\mathrm{Sp}}} \\
 {[X\wedge^LM,X\wedge^LM]_{\ast}^{\mathcal{C}}} \arrow{ur}[below]{\mathrm{adj}}[above]{\cong} & 
\end{tikzcd}
\]
It is commutative because, by definition of $\lambda$, for $\alpha \in [M,L_{K(1)}M]_{\ast}^{\mathrm{Sp}}$, the image of $X\wedge ^L \alpha$ under the adjunction isomorphism is precisely $\lambda \circ \alpha$. All we need to show is that $$X\wedge^L-: [M,M]_n^{L_{K(1)}\mathrm{Sp}}\longrightarrow [X\wedge ^LM, X\wedge^LM]_n^{\mathcal{C}}$$ is an isomorphism for all $n$.\\ However, via the self map $v_1^4$, the endomorphisms of the Moore spectrum are periodic of period 8 in $\mathrm{Ho}(L_{K(1)} \mathrm{Sp})$
\begin{displaymath}
[M,M]_n^{L_{K(1)}\mathrm{Sp}}\cong [M,M]_{n+8}^{L_{K(1)}\mathrm{Sp}}.\end{displaymath}

 Therefore, we only have to show that the desired isomorphism holds for $n=1,...,8$. To that end, we show that $$X\wedge^L-: [\mathbb{S}^0,\mathbb{S}^0]_n^{L_{K(1)}\mathrm{Sp}}\longrightarrow [X,X]_n^{\mathcal{C}}$$ 
is an isomorphism for $n=0,...,9$ by verifying that 
\begin{displaymath}
\psi : [\mathbb{S}^0,\mathbb{S}^0]_n^{L_{K(1)}\mathrm{Sp}} \xrightarrow{X\wedge^L-}[X,X]_n^{\mathcal{C}}\xrightarrow{\Phi^{-1}} [\mathbb{S}^0,\mathbb{S}^0]_n^{L_{K(1)}\mathrm{Sp}}
\end{displaymath}
is an isomorphism in that range. By Remark \ref{sphere} and \cite[Lemma 3.4]{roitzheim2007rigidity}, we deduce that the desired isomorphism is established for $n=1,...,9$ because the homotopy groups of $L_1\mathbb{S}^0$ and $L_{K(1)}\mathbb{S}^0$ agree in degrees 1 to 9. As for the degree 0, the situation is similar to \cite{roitzheim2007rigidity} because $y_0$ is still the only nonzero torsion element in $\pi_0(L_{K(1)}\mathbb{S}^0)=\mathbb{Z}_{2}^{\wedge} \{ \iota \} \oplus \mathbb{Z}/2 \{y_0 \} $, hence we have that \begin{displaymath}
 X\wedge^L y_0 = \Phi (y_0), \ \text{by \cite[Lemma 3.4]{roitzheim2007rigidity}}.
\end{displaymath} Therefore, the morphism $\Psi$ is an isomorphism at degree 0 as well. The desired result will follow by using a five lemma argument. To be more specific, we have the following commutative diagram \[
\begin{tikzcd}
0\arrow{r}&
{(\pi_nL_{K(1)}\mathbb{S}^0)\big /2}\arrow{r}{\mathrm{incl}_{\ast}}\arrow{d}
&{\pi_n(L_{K(1)}M)}\arrow{r}{\mathrm{pinch}_{\ast}}\arrow{d}{}
&{\{ \pi_{n-1}L_{K(1)}\mathbb{S}^0\}_2}\arrow{r}{}\arrow{d}{}
&
0\\
0\arrow{r}&
{[X,X]_n^{\mathcal{C}}\big / 2}\arrow{r}{\mathrm{incl}_{\ast}}&{[X,X\wedge M]_n^{\mathcal{C}}}\arrow{r}{\mathrm{pinch}_{\ast}}& {\{[X,X]_{n-1}^{\mathcal{C}} \}_{2}} \arrow{r}{} &0
\end{tikzcd}
\]
where the two rows are short exact sequences, and the left hand side, as well as the right hand side, are isomorphisms for $n=0,...,9$. Therefore, we conclude that the middle row is an isomorphism. Now, the statement that $$X\wedge^L-: [M,M]_n^{L_{K(1)}\mathrm{Sp}}\longrightarrow [X\wedge ^LM, X\wedge^LM]_n^{\mathcal{C}}$$ is an isomorphism for $n=1,...,8$ is deduced from the following commutative diagram \[
\begin{tikzcd}[column sep=1.2em]
0\arrow{r}&
{(\pi_{n+1}(L_{K(1)}M))\big /2}\arrow{r}{\mathrm{pinch}^{\ast}}\arrow{d}{\cong}
&{[M,M]_n^{L_{K(1)}\mathrm{Sp}}}\arrow{r}{\mathrm{incl}^{\ast}}\arrow{d}{}
&{\{ \pi_{n}L_{K(1)}M\}_2}\arrow{r}{}\arrow{d}{\cong}
&0\\
0\arrow{r}&{[X\wedge^L \mathbb{S}^0,X\wedge^L M]_{n+1}^{\mathcal{C}}\big /2}\arrow{r}{}&{[X\wedge^L M,X\wedge^L M]_n^{\mathcal{C}}}\arrow{r}{}& {\{[X\wedge^L \mathbb{S}^0,X\wedge^L M]_{n}^{\mathcal{C}} \}_{2}} \arrow{r}{} &0.
\end{tikzcd}
\]
Thus, we can conclude that $L_{K(1)}M$ and $\mathrm{Hom}(X,X\wedge^LM)$ are weakly equivalent in $\mathrm{Sp}$.

\end{proof}
Now that we have all the necessary arguments, we can use the fact that $M$ is a compact generator of $\mathrm{Ho}(L_{K(1)}\mathrm{Sp})$ to prove our main theorem.
\begin{thm}
The Quillen adjunction $$X\wedge -:L_{K(1)}\mathrm{Sp}\rightleftarrows\mathcal{C}:\mathrm{Hom}(X,-)$$ is a Quillen equivalence.
\end{thm}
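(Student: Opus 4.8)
The theorem is the last step of the programme: by Corollary \ref{K(1)-local} we already have the Quillen adjunction, and the plan is to upgrade it to a Quillen equivalence via the standard recognition principle that a Quillen pair between stable model categories is a Quillen equivalence as soon as its total left derived functor is an equivalence of homotopy categories (see e.g.\ \cite{hovey2007model}). So it suffices to prove that
\[
X\wedge^{L}- \colon \mathrm{Ho}(L_{K(1)}\mathrm{Sp}) \longrightarrow \mathrm{Ho}(\mathcal{C})
\]
is an equivalence of categories. This functor is exact, being the left derived functor of a left Quillen functor between stable model categories, and it preserves arbitrary coproducts, since a coproduct in the homotopy category of a model category is represented by the coproduct of cofibrant representatives and the left adjoint $X\wedge-$ preserves both cofibrant objects and coproducts.

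The input for the compact-generator argument (as in \cite{roitzheim2007rigidity}, built on Theorem \ref{Keller}) is threefold. First, $M=L_{K(1)}M(\mathbb{Z}/2)$ is a compact generator of $\mathrm{Ho}(L_{K(1)}\mathrm{Sp})$ by \cite[Theorem 7.3]{hovey_strickland_morava}. Second, by Lemma \ref{Image of M} we have $X\wedge^{L}M\cong\Phi(L_{K(1)}M)$, and since $\Phi$ is an equivalence of triangulated categories it carries the compact generator $L_{K(1)}M$ to a compact generator of $\mathrm{Ho}(\mathcal{C})$; hence $X\wedge^{L}M$ is a compact generator of $\mathrm{Ho}(\mathcal{C})$. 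Third, Proposition \ref{isomorphism} shows that $\lambda$ is a $\pi_{\ast}$-isomorphism; applying $[M,-]^{\mathrm{Sp}}_{\ast}$ and using both that $\mathrm{RHom}(X,X\wedge^{L}M)$ is $K(1)$-local (Corollary \ref{K(1)-local}) and the adjunction isomorphism, this says precisely that
\[
X\wedge^{L}- \colon [M,M]^{L_{K(1)}\mathrm{Sp}}_{n} \longrightarrow [X\wedge^{L}M,X\wedge^{L}M]^{\mathcal{C}}_{n}
\]
is an isomorphism for every $n\in\mathbb{Z}$.

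With these facts the rest is formal. Fixing the compact generator $P=M$, the full subcategory of those $Y$ for which $X\wedge^{L}-\colon[P,Y]^{L_{K(1)}\mathrm{Sp}}_{\ast}\to[X\wedge^{L}P,X\wedge^{L}Y]^{\mathcal{C}}_{\ast}$ is an isomorphism is triangulated (five lemma, using exactness) and closed under coproducts (this uses compactness of both $P$ and $X\wedge^{L}P$ together with coproduct-preservation of $X\wedge^{L}-$), hence localising; it contains $P$, so it is all of $\mathrm{Ho}(L_{K(1)}\mathrm{Sp})$. Next, fixing $Y$, the full subcategory of those $Z$ for which $X\wedge^{L}-\colon[Z,Y]^{L_{K(1)}\mathrm{Sp}}_{\ast}\to[X\wedge^{L}Z,X\wedge^{L}Y]^{\mathcal{C}}_{\ast}$ is an isomorphism is again triangulated and localising (coproducts in the first variable become products, and $X\wedge^{L}-$ preserves coproducts, so no compactness is needed here) and contains $P$ by the previous step, hence is everything; thus $X\wedge^{L}-$ is fully faithful. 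Its essential image is then a localising subcategory of $\mathrm{Ho}(\mathcal{C})$ (closed under coproducts and cones because $X\wedge^{L}-$ is exact and coproduct-preserving, using fullness for the cones) containing the generator $X\wedge^{L}M$, hence all of $\mathrm{Ho}(\mathcal{C})$ by Definition \ref{generator}; so $X\wedge^{L}-$ is also essentially surjective, and therefore an equivalence.

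I expect essentially all the genuine content to be already contained in Proposition \ref{isomorphism} and the computations feeding it; the step above is bookkeeping with localising subcategories. The one point that must be handled carefully — and the reason the whole set-up is organised around $M$ rather than the $K(1)$-local sphere $L_{K(1)}\mathbb{S}^{0}$ — is the closure under coproducts in the $Y=P$ step: it genuinely requires $M$ and $X\wedge^{L}M$ to be \emph{compact} in $\mathrm{Ho}(L_{K(1)}\mathrm{Sp})$ and $\mathrm{Ho}(\mathcal{C})$ respectively, whereas $L_{K(1)}\mathbb{S}^{0}$ is only a generator and not a compact one, because $L_{K(1)}$ is not smashing.
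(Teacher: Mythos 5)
Your argument is correct and relies on exactly the same key inputs as the paper's — Proposition \ref{isomorphism} for the base case, compactness of $M$ and $X\wedge^{L}M$ for closure under coproducts, and the generating property coming from $\Phi$ — but the logical packaging is genuinely different. The paper invokes Hovey's criterion \cite[1.3.16]{hovey2007model}: a Quillen adjunction is a Quillen equivalence provided $\mathrm{RHom}(X,-)$ reflects isomorphisms and the unit $A\to\mathrm{RHom}(X,X\wedge^{L}A)$ is an isomorphism for all $A$. The reflection of isomorphisms is essentially immediate from $X$ being a generator of $\mathrm{Ho}(\mathcal{C})$, and the unit condition is handled by a single localising-subcategory argument. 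You instead show directly that $X\wedge^{L}-$ is an equivalence of triangulated categories via full faithfulness plus essential surjectivity: your first localising step (fixing $M$ as source, varying the target) is exactly equivalent to the paper's unit argument — since $M$ detects isomorphisms, having $[M,Y]_{\ast}\to[M,\mathrm{RHom}(X,X\wedge^{L}Y)]_{\ast}$ an isomorphism for all $Y$ already says the unit is an isomorphism — but then you add a second localising step in the source variable and an essential-image argument. These extra steps are sound (and your observation that no compactness is needed in the source variable, because coproducts become products, is correct), but they are redundant given Hovey's criterion: once the unit is an isomorphism, one-line reflection of isomorphisms closes the loop. Both approaches work; the paper's is a bit shorter, while yours is somewhat more self-contained in that it doesn't lean on a specific recognition lemma beyond the basic fact that a Quillen pair whose derived adjunction is an equivalence is a Quillen equivalence. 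You also correctly identify the decisive point that the whole set-up must be organised around the compact generator $M$ rather than the merely-generating $K(1)$-local sphere, which the paper flags as the main departure from the $E(1)$-local case.
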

\begin{proof}
By \cite[1.3.16]{hovey2007model}, it is sufficient to show the following:
\begin{enumerate}
\item[$\bullet$]$\mathrm{RHom}(X,-): \mathrm{Ho}(\mathcal{C})\rightarrow \mathrm{Ho}(L_{K(1)}\mathrm{Sp})$ reflects isomorphisms.
\item[$\bullet$]$A\rightarrow \mathrm{RHom}(X,X\wedge^LA)$ is an isomorphism for all $A\in \mathrm{Ho}(L_{K(1)}\mathrm{Sp})$.
\end{enumerate}
Since $\Phi$ is an equivalence of triangulated categories, $\Phi (L_{K(1)}\mathbb{S}^0)=X$ is a generator for $\mathrm{Ho}(\mathcal{C})$, therefore as mentioned in the first section in Remark \ref{rem_generator} it detects isomorphisms.\\ Let us first show the first point. For a morphism $f:Y\rightarrow Z$ in $\mathcal{C}$, let \begin{displaymath}
\mathrm{RHom}(X,f): \mathrm{RHom}(X,Y)\rightarrow \mathrm{RHom}(X,Z)
\end{displaymath} be an isomorphism in $\mathrm{Ho}(L_{K(1)}\mathrm{Sp})$, so $$[\mathbb{S}^0, \mathrm{RHom}(X,Y)]_{{}_{\ast}}^{L_{K(1)}(\mathrm{Sp})}\xrightarrow{\mathrm{RHom}(X,f)}[\mathbb{S}^0, \mathrm{RHom}(X,Z)]_{{}_{\ast}}^{L_{K(1)}(\mathrm{Sp})}$$ is an isomorphism. By adjunction, $$[X,Y]^{\mathcal{C}}_{\ast}\xrightarrow{f_{\ast}} [X,Z]_{\ast}^{\mathcal{C}}$$ is an isomorphism. Since $X$ is a generator in $\mathrm{Ho}(\mathcal{C})$, we have that $$f:Y\rightarrow Z$$ is an isomorphism in $\mathrm{Ho}(\mathcal{C})$ which proves the first point.\\In order to prove the second point, we use Theorem \ref{Keller} mentioned in the first section of this paper. Consider the full subcategory $\mathcal{T}$ of $\mathrm{Ho}(L_{K(1)}\mathrm{Sp})$ containing those $A\in \mathrm{Ho}(L_{K(1)}\mathrm{Sp})$ such that 
\begin{displaymath}
A\rightarrow \mathrm{RHom}(X,X\wedge ^L A)
\end{displaymath} is an isomorphism. Our goal is to prove that $\mathcal{T}= \mathrm{Ho}(L_{K(1)}\mathrm{Sp})$. Since $\mathrm{RHom}(X,-)$ and $X\wedge ^L -$ are exact functors, $\mathcal{T}$ is triangulated. By Proposition \ref{isomorphism} it contains the Moore spectrum $M$, i.e. a compact generator of $\mathrm{Ho}(L_{K(1)}\mathrm{Sp})$.\\ 
In order to use Theorem \ref{Keller}, we still need to verify that this category $\mathcal{T}$ is also closed under coproducts. Now let $A_i$, $i\in \mathcal{I}$, be a family of objects in $\mathcal{T}$. We would like that $\textstyle\coprod\limits_{i} A_i \in \mathcal{T}$. As $M$ reflects isomorphisms, this means that we need to show that $$[M,\textstyle\coprod\limits_{i} A_i]_{{}_{\ast}}^{L_{K(1)}\mathrm{Sp}}\rightarrow [M,\mathrm{RHom}(X,X\wedge ^L(\textstyle\coprod\limits_{i} A_i))]_{{}_{\ast}}^{L_{K(1)}\mathrm{Sp}} $$ is an isomorphism. On the other hand, by adjunction, $$[M,\mathrm{RHom}(X,X\wedge ^L(\textstyle\coprod\limits_{i} A_i))]_{{}_{\ast}}^{L_{K(1)}\mathrm{Sp}} \cong [X\wedge ^LM, X\wedge ^L (\textstyle\coprod\limits_{i} A_i)]_{{}_{\ast}}^{\mathcal{C}} .$$ Since $X\wedge ^L-$ is a left adjoint, it commutes with coproducts, therefore $$[X\wedge ^LM, X\wedge ^L (\textstyle\coprod\limits_{i} A_i)]_{\ast}^{\mathcal{C}}\cong [X\wedge ^LM,\textstyle\coprod\limits_{i} (X\wedge ^L A_i)] _{\ast}^{\mathcal{C}}.$$ Since $\Phi$ is an equivalence of triangulated categories, and $L_{K(1)}M$ is a compact generator of $\mathrm{Ho}(L_{K(1)}\mathrm{Sp})$, we have that $\Phi(L_{K(1)}M)=X\wedge ^LM$ is a compact generator of $\mathrm{Ho}(\mathcal{C})$, and this means that$$ [X\wedge ^LM,\textstyle\coprod\limits_{i} (X\wedge ^L A_i)] _{\ast}^{\mathcal{C}}\cong \textstyle\bigoplus\limits_{i} [X\wedge^LM, X\wedge ^LA_i]_{\ast}^{\mathcal{C}} .$$Similarly, we know that $$[M,\textstyle\coprod\limits_{i} A_i]_{{}_{\ast}}^{L_{K(1)}\mathrm{Sp}}\cong  \textstyle\bigoplus\limits_{i} [M,A_i]_{{}_{\ast}}^{L_{K(1)}\mathrm{Sp}} .$$ As $A_i \in \mathcal{T}$, for all $i\in \mathcal{I}$, $$[M, A_i] _{{}_{\ast}}^{L_{K(1)}\mathrm{Sp}}\cong [M, \mathrm{RHom}(X,X\wedge ^L A_i)] _{{}_{\ast}}^{L_{K(1)}\mathrm{Sp}},$$ which is induced by $$A_i\xrightarrow{\cong} \mathrm{RHom}(X,X\wedge ^L A_i).$$ By naturality of those isomorphisms, we have that $\mathcal{T}$ is closed under coproducts, therefore $\mathcal{T}= \mathrm{Ho}(L_{K(1)}\mathrm{Sp})$, and our Quillen adjunction is indeed a Quillen equivalence.
\end{proof}
What is known so far is that, 2-locally, we have rigidity of Ho($L_1$Sp) and Ho($L_{K(1)}$Sp). It will be intriguing to find out if at odd primes, an exotic model for $L_{K(1)}$Sp can be constructed out of the one already established for the $E(1)$-local case.
%In conclusion,  are rigid for $p=2$. However, for $p$ odd, an exotic model has been constructed for $L_1$Sp, it will be intriguing to find out if that case can be linked with $L_{K(1)}$Sp at odd primes.
%The rigidity of the $K(1)$-local stable homotopy category at $p=2$ is now proved. It will be intriguing to find out if we can construct an exotic model for Ho($L_K(1)$Sp) at $p$ odd, and try to link it to the exotic model of $L_1$Sp for $p$ odd.

\bibliographystyle{alpha}

\begin{thebibliography}{Pat17b}

\bibitem[Ada66]{ADAMS_J(X)4}
J.~F. Adams.
\newblock On the groups {J(X) IV}.
\newblock {\em Topology}, 5(1):21 -- 71, 1966.

\bibitem[BF78]{bousfield1978homotopy}
A.~K. Bousfield and E.~M Friedlander.
\newblock Homotopy theory of {$\Gamma$}-spaces, spectra, and bisimplicial sets.
\newblock In {\em Geometric applications of homotopy theory II}, pages 80--130.
  Springer, 1978.

\bibitem[Bou79]{bousf}
A~.K. Bousfield.
\newblock The localization of spectra with respect to homology.
\newblock {\em Topology}, 18(4):257--281, 1979.

\bibitem[BR11]{barnes_roitzheim_local}
D.~Barnes and C.~Roitzheim.
\newblock Local framings.
\newblock {\em New York J. Math}, 17:513--552, 2011.

\bibitem[BR14]{barnes2014rational}
D.~Barnes and C.~Roitzheim.
\newblock Rational equivariant rigidity.
\newblock {\em Contemporary Mathematics}, 617(1):13--30, 2014.

\bibitem[DS95]{dwyer1995homotopy}
W.~G. Dwyer and J.~Spalinski.
\newblock Homotopy theories and model categories.
\newblock {\em Handbook of algebraic topology}, 73:126, 1995.

\bibitem[DS09]{dugger2009curious}
D.~Dugger and B.~Shipley.
\newblock A curious example of triangulated-equivalent model categories which
  are not quillen equivalent.
\newblock {\em Algebraic \& Geometric Topology}, 9(1):135--166, 2009.

\bibitem[Dwy04]{dwyer2004localizations}
W.G. Dwyer.
\newblock Localizations.
\newblock In {\em Axiomatic, enriched and motivic homotopy theory}, pages
  3--28. Springer, 2004.

\bibitem[Fra96]{franke1996uniqueness}
J.~Franke.
\newblock Uniqueness theorems for certain triangulated categories possessing an
  adams spectral sequence.
\newblock {\em K-theory Preprint Archives}, 139, 1996.

\bibitem[Hov99]{hovey2007model}
M.~Hovey.
\newblock Model categories, volume 63 of mathematical surveys and monographs.
\newblock {\em American Mathematical Society, Providence, RI}, 109:112, 1999.

\bibitem[HS98]{hopkins_smith}
M~.l~J Hopkins and J.~H. Smith.
\newblock Nilpotence and stable homotopy theory {II}.
\newblock {\em Annals of mathematics}, 148(1):1--49, 1998.

\bibitem[HS99]{hovey_strickland_morava}
M.~Hovey and N.~P. Strickland.
\newblock {\em Morava $K$-theories and localisation}, volume 666.
\newblock American Mathematical Soc., 1999.

\bibitem[Len12]{lenhardt2012stable}
F.~Lenhardt.
\newblock Stable frames in model categories.
\newblock {\em Journal of Pure and Applied Algebra}, 216(5):1080--1091, 2012.

\bibitem[Pat16]{patchkoria2016rigidity}
I.~Patchkoria.
\newblock Rigidity in equivariant stable homotopy theory.
\newblock {\em Algebraic \& Geometric Topology}, 16(4):2159--2227, 2016.

\bibitem[Pat17a]{patchkoria2017derived}
I.~Patchkoria.
\newblock The derived category of complex periodic {$K$}-theory localized at an
  odd prime.
\newblock {\em Advances in Mathematics}, 309:392--435, 2017.

\bibitem[Pat17b]{patchkoria2017exotic}
I.~Patchkoria.
\newblock On exotic equivalences and a theorem of {F}ranke.
\newblock {\em Bulletin of the London Mathematical Society}, 49(6):1085--1099,
  2017.

\bibitem[PR17]{patchkoria2017rigidity}
I.~Patchkoria and C.~Roitzheim.
\newblock Rigidity and exotic models for {$v_1$}-local { $G$}-equivariant
  stable homotopy theory.
\newblock {\em arXiv preprint arXiv:1705.03855}, 2017.

\bibitem[Rav84]{ravenel1984localization}
D.~C. Ravenel.
\newblock Localization with respect to certain periodic homology theories.
\newblock {\em American Journal of Mathematics}, 106(2):351--414, 1984.

\bibitem[Rav94]{ravenel_orange}
D.~C. Ravenel.
\newblock Nilpotence and periodicity in stable homotopy theory.
\newblock {\em Bull. Amer. Math. Soc}, 31:243--246, 1994.

\bibitem[Roi07]{roitzheim2007rigidity}
C.~Roitzheim.
\newblock Rigidity and exotic models for the {$K$}--local stable homotopy
  category.
\newblock {\em Geometry \& Topology}, 11(4):1855--1886, 2007.

\bibitem[Sch01]{schwede2_local}
S.~Schwede.
\newblock The stable homotopy category has a unique model at the prime 2.
\newblock {\em Advances in Mathematics}, 164(1):24--40, 2001.

\bibitem[Sch02]{schlichting2002note}
M.~Schlichting.
\newblock A note on {$K$}-theory and triangulated categories.
\newblock {\em Inventiones mathematicae}, 150(1):111--116, 2002.

\bibitem[Sch07]{schwede2007stable}
S.~Schwede.
\newblock The stable homotopy category is rigid.
\newblock {\em Annals of Mathematics}, pages 837--863, 2007.

\bibitem[SS02]{schwede2002uniqueness}
S.~Schwede and B.~Shipley.
\newblock A uniqueness theorem for stable homotopy theory.
\newblock {\em Mathematische Zeitschrift}, 239(4):803--828, 2002.

\bibitem[SS03]{schwede2003stable}
S.~Schwede and B.~Shipley.
\newblock Stable model categories are categories of modules.
\newblock {\em Topology}, 42(1):103--153, 2003.

\end{thebibliography}

\end{document}